\documentclass[a4paper,reqno,11pt]{amsart}

\textheight 220mm
\textwidth 160mm
\hoffset -16mm

\usepackage{amssymb}
\usepackage{amstext}
\usepackage{amsmath}
\usepackage{amscd}
\usepackage{amsthm}
\usepackage{amsfonts}
\usepackage{enumerate}
\usepackage{graphicx}
\usepackage{latexsym}
\usepackage[all]{xy}
\input xy
\xyoption{all}
\usepackage[usenames]{color}

\newtheorem{theorem}{Theorem}[section]

\newtheorem{corollary}[theorem]{Corollary}
\newtheorem{lemma}[theorem]{Lemma}
\newtheorem{proposition}[theorem]{Proposition}
\newtheorem{question}[theorem]{Question}

\theoremstyle{definition}
\newtheorem{definition}[theorem]{Definition}
\newtheorem{remark}[theorem]{Remark}
\newtheorem{example}[theorem]{Example}
\newtheorem*{ac}{{\sc Acknowledgments}}
\newtheorem{definitiontheorem}[theorem]{Definition-Theorem}

\newcommand{\TT}{{\mathcal T}}
\newcommand{\MM}{{\mathcal M}}

\newcommand{\add}{\mathsf{add}}

\newcommand{\thick}{\mathsf{thick}}
\newcommand{\Hom}{\operatorname{Hom}\nolimits}
\newcommand{\silt}{\operatorname{silt}\nolimits}
\newcommand{\tilt}{\operatorname{tilt}\nolimits}

\renewcommand{\mod}{\operatorname{mod}} 
\newcommand{\proj}{\operatorname{proj}} 

\newcommand{\homo}[3]{{\rm{Hom}}_{#1}(#2, #3)} 
\newcommand{\ext}[4]{{\rm{Ext}}_{#1}^{#2}(#3, #4)} 
\newcommand{\ehomo}[2]{{\rm{End}}_{#1}(#2)} 

\newcommand{\length}[1]{\mathsf{length}(#1)} 

\newcommand{\CCC}{\mathcal{C}} 
\newcommand{\EEE}{\mathcal{E}} 

\newcommand{\ann}[1]{\mathsf{ann}#1} 
\newcommand{\torsion}{\operatorname{\mathsf{tor}}}

\newtheorem{assumption}[theorem]{Assumption}

\newcommand{\K}{\mathsf{K}}

\renewcommand{\mod}{\operatorname{mod}\nolimits}

\numberwithin{equation}{theorem}

\begin{document}

\title{Tilting-connected symmetric algebras}
\author{Takuma Aihara}
\address{Division of Mathematical Science and Physics, Graduate School of Science and Technology, Chiba University, 
Yayoi-cho, Chiba 263-8522, Japan}
\email{taihara@math.s.chiba-u.ac.jp}
\begin{abstract}
The notion of silting mutation was introduced by Iyama and the author. 
In this paper we mainly study silting mutation for self-injective algebras 
and prove that any representation-finite symmetric algebra is tilting-connected. 
Moreover we give some sufficient conditions for a Bongartz-type Lemma to hold for silting objects. 
\end{abstract}
\maketitle


\section{Introduction}\label{intro}

The study of derived categories is important in many branches of mathematics, 
for example representation theory and algebraic geometry. 
Derived categories include much of the information of an abelian category. 
The study of equivalences of derived categories is a major subject. 
In representation theory, it is important that derived equivalences preserve many homological properties, 
e.g. an algebra being symmetric, self-injective or Iwanaga-Gorenstein. 
Moreover representation-finiteness of self-injective algebras is also derived invariant. 

From a Morita theoretic viewpoint \cite{Ri1}, tilting complexes are crucial. 
Moreover tilting mutation of a tilting complex often plays an important role 
for Brou\'e's abelian defect group conjecture and Brauer tree algebras. 
In the study of Brou\'e's conjecture, 
Okuyama's method is often employed to show that the conjecture holds in several cases  \cite{O, Ri2}.
This is nothing but taking iterated tilting mutation of tilting complexes. 
On the other hand, the author introduced mutation of Brauer trees 
and proved that it is compatible with tilting mutation for Brauer tree algebras \cite{A}. 

In this paper we mainly consider symmetric algebras. 
Then tilting mutation acts on the set of basic tilting complexes. 
We say that a symmetric algebra is \emph{tilting-connected} 
if the action of iterated irreducible tilting mutation on the set of basic tilting complexes is transitive: 
see Definition \ref{silting connected}. 
We pose the following natural question: see Question \ref{silting-connected question}. 

\begin{question}\label{tilting-connected question}
When is a symmetric algebra tilting-connected?
\end{question}

It was shown in \cite{AI} that a local symmetric algebra is tilting-connected: see Theorem \ref{local and hereditary}. 
On the other hand, there is a symmetric algebra which is not tilting-connected \cite{AGI}. 

A main result of this paper is the following. 

\begin{theorem}\label{symm finite case1}
Any representation-finite symmetric algebra is tilting-connected. 
\end{theorem}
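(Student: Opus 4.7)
The plan is to prove that for any basic tilting complex $T$ of $A$ there is a chain of irreducible tilting mutations connecting $T$ to $A$ itself; tilting-connectedness of $A$ then follows from the symmetry of this relation. Since $A$ is symmetric, tilting complexes coincide with silting complexes, so I would work in the silting poset $(\silt A, \leq)$ of Aihara--Iyama \cite{AI}. By shifting $T$ suitably and using that $T$ lies in $\K^b(\proj A)$, I would arrange that $A[-N] \leq T \leq A$ in the silting order for some $N \geq 0$, placing $T$ inside the silting interval $[A[-N], A]$.

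The key step is to show that this silting interval is \emph{finite}, which is where representation-finiteness of $A$ enters crucially. I would proceed by induction on $N$, combining: (i) the Bongartz-type lemma for silting objects developed elsewhere in this paper, which controls how presilting objects extend to silting ones and hence how mutation acts inside bounded intervals; (ii) the observation that a representation-finite algebra is $\tau$-tilting finite, so that the 2-term silting complexes form a finite set (the base case $N=1$); and (iii) a reduction of the general $N$-step case to the 2-term case over endomorphism algebras of intermediate silting objects, exploiting the fact that representation-finiteness of self-injective algebras is preserved under derived equivalence, as noted in the introduction. Once finiteness of $[A[-N], A]$ is established, I would apply the theorem from \cite{AI} stating that two comparable silting objects with finite interval between them are connected by iterated irreducible left mutation, producing the required chain from $A$ down to $T$.

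The main obstacle is step (iii): producing a clean reduction from general tilting complexes to 2-term ones while preserving representation-finiteness at each stage. The Bongartz-type lemma must be strong enough to guarantee that at each inductive level the relevant intermediate silting objects exist and have well-behaved endomorphism algebras on which the inductive hypothesis can be reinvoked; this is precisely the reason for developing such a lemma earlier in the paper. A secondary but genuinely subtle issue is the initial comparability step $A[-N] \leq T \leq A$, which requires care with boundedness estimates and the direction of the silting partial order, and which may itself need to be derived from the Bongartz-type lemma applied to $T$ and to appropriate shifts of $A$.
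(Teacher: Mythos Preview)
Your overall architecture matches the paper's: prove that every silting interval $[A[-N],A]$ is finite by induction on $N$ (with the $N=1$ case coming from representation-finiteness), then invoke Theorem~\ref{finite mutation2} to conclude connectedness. In fact the paper proves connectedness first (via Proposition~\ref{A}) and silting-discreteness afterwards, but the two arguments share the same inductive core, so your ordering is fine.

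Where your proposal goes wrong is in the mechanism for the inductive step~(iii). You attribute it to ``the Bongartz-type lemma'', but neither Proposition~\ref{Bongartz-type} nor Theorem~\ref{finite mutation1} produces, from a tilting object $P$ with $A[-\ell]\geq P\geq A$, a \emph{two-term} tilting object $T$ with $T[-\ell+1]\geq P\geq T$. That is exactly what is needed to descend from length $\ell$ to length $\ell-1$ over $\End(T)$, and it is supplied not by any Bongartz-type statement but by the torsion-class construction of Section~\ref{torsion}: one forms the torsion class $\mathcal{C}={}^{\perp}H^{0}(\nu P)$ in $\mod A$, and the associated complex $T_{\mathcal{C}}$ of Definition~\ref{asilting} is shown (Lemma~\ref{lemma:main}, using Theorem~\ref{silting induced by torsion pair}) to be a tilting object with the required sandwich property. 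Representation-finiteness enters here to guarantee that $\mathcal{C}$ is covariantly finite. The Bongartz-type Proposition~\ref{Bongartz-type} does appear, but only inside Lemma~\ref{vanishing} to upgrade $T_{\mathcal{C}}$ from presilting to silting; it does not manufacture the intermediate object. So you have correctly located the obstacle but not the tool that overcomes it, and without that tool the induction does not go through.

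A minor point: the comparability $A[-N]\geq T\geq A$ is not subtle and does not need the Bongartz-type lemma; it is immediate from Proposition~\ref{prop:partial order and length}, since any bounded complex of projectives has finite length.
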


As another theme of this paper, we consider a Bongartz-type Lemma. 
We recall the definition of a (classical) tilting module. 
Let $A$ be a finite dimensional algebra over a field. 
An $A$-module $T$ is called a \emph{tilting module} if it satisfies
(i) $\ext{A}{1}{T}{T}=0$, 
(ii) the projective dimension of $T$ is at most one, and
(iii) there exists an exact sequence $0\to A\to T_0\to T_1\to0$ with $T_0,T_1$ in $\add T$.

Bongartz proved the result \cite{B}. 

\begin{theorem}[Bongartz]\label{Bongartz}
Any $A$-module satisfying (i) and (ii) above is a direct summand of a tilting module. 
\end{theorem}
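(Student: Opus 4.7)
The plan is to construct an explicit direct summand $X$ such that $T:=M\oplus X$ is tilting, via a universal extension from $A$ (writing $M$ for the given module satisfying (i) and (ii)). Since $A$ is finite-dimensional, $\End_A(M)$ is finite-dimensional and $\ext{A}{1}{M}{A}$ is a finitely generated right $\End_A(M)$-module. I would choose a generating set $\eta_1,\ldots,\eta_n$ and form the extension
\[
0\to A\to X\to M^n\to 0
\]
whose class in $\ext{A}{1}{M^n}{A}\cong\ext{A}{1}{M}{A}^n$ is $(\eta_1,\ldots,\eta_n)$. The design is such that, in the long exact sequence obtained by applying $\homo{A}{M}{-}$ to this sequence, the connecting map $\homo{A}{M}{M^n}\to\ext{A}{1}{M}{A}$ sends $(\phi_1,\ldots,\phi_n)\in\End_A(M)^n$ to $\sum_i \eta_i\cdot\phi_i$, hence is surjective by construction. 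I would then set $T:=M\oplus X$.

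The next step is to verify the three tilting axioms. Axiom (iii) is immediate from the defining extension, with $T_0=X$ and $T_1=M^n$ both in $\add T$. Axiom (ii) follows because $A$ is projective and $M$ has projective dimension at most one, so the extension forces the projective dimension of $X$, and hence of $T$, to be at most one. Axiom (i), the vanishing of $\ext{A}{1}{-}{-}$ on the four pairs of summands of $T$, is the substantive part: the equality $\ext{A}{1}{M}{M}=0$ is the hypothesis; $\ext{A}{1}{M}{X}=0$ follows from the surjectivity above combined with $\ext{A}{1}{M}{M^n}=0$; the remaining vanishings $\ext{A}{1}{X}{M}=0$ and $\ext{A}{1}{X}{X}=0$ come from applying $\homo{A}{-}{M}$ and $\homo{A}{-}{X}$ to the defining sequence, using projectivity of $A$ together with $\ext{A}{1}{M^n}{M}=\ext{A}{1}{M^n}{X}=0$.

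The main obstacle is purely conceptual: recognizing the universal extension as the right object to form, so that the generation condition on $\eta_1,\ldots,\eta_n$ directly produces the surjectivity that kills $\ext{A}{1}{M}{X}$. Once $X$ is constructed this way, the remaining verifications reduce to standard long-exact-sequence chases, and the need to iterate into higher Ext groups is avoided because the projective dimension hypothesis makes $\ext{A}{\geq 2}{M}{-}$ automatically zero.
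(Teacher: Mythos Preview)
Your argument is correct and is precisely the classical Bongartz completion: form the universal extension of $A$ by copies of $M$ so that the connecting homomorphism $\Hom_A(M,M^n)\to\Ext^1_A(M,A)$ is surjective, and then verify the three tilting axioms by the long exact sequence chases you describe. Each step goes through exactly as you outline; the projective-dimension bound on $M$ ensures that no $\Ext^{\ge 2}$ terms interfere.

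Note, however, that the paper does not give its own proof of this theorem: it is stated in the introduction as Bongartz's result and attributed to \cite{B}. So there is nothing in the paper to compare your argument against for this particular statement; you have simply reconstructed the original proof. The paper's own contribution in this direction is Proposition~\ref{Bongartz-type}, a triangulated analogue for silting objects. There the construction is dual to yours: instead of a universal extension of the ``base'' object $A$ by copies of the partial object $M$, one takes a right $\add U$-approximation $U'\to T$ of the ambient silting object $T$ and completes it to a triangle $V\to U'\to T\to V[1]$, setting $W=U\oplus V$. In the module setting your universal extension is essentially a minimal left $\add M$-approximation of $A$ (the map $A\to X$ becomes one after adding a copy of $M$ if needed), so the two constructions are formally dual. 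Each buys the same thing---a canonical complement---but the triangulated version in the paper trades the projective-dimension hypothesis for the two-sided bound $T[-1]\ge U\ge T$, which is what makes the argument go through without an abelian exact structure.
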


It is natural to consider a Bongartz-type Lemma for complexes over a finite dimensional algebra $A$. 
We say that a complex $T$ in $\K^{\rm b}(\proj A)$ is \emph{pretilting} 
if it satisfies $\homo{\K^{\rm b}(\proj A)}{T}{T[i]}=0$ for any non-negative integer $i$, 
and \emph{partial tilting} if it is a direct summand of a tilting complex. 
Cleary any partial tilting complex is pretilting, 
but the converse is not true in general (e.g. \cite{Ri1}). 
It is natural to ask when the converse is true. 
We pose the following question: see Question \ref{Bongartz-type question}. 

\begin{question}\label{Bongartz-type question1}
Let $A$ be a symmetric algebra. 
Is any pretilting complex partial tilting?
\end{question}

Abe and Hoshino proved that Question \ref{Bongartz-type question1} has a positive answer 
if $A$ is a representation-finite symmetric algebra \cite{AH}. 
We give a simple proof of their result by applying our theory of tilting mutation: see Corollary \ref{symm finite Bongartz-type}. 

This paper is organized as follows:

In Section \ref{silting}, we study silting mutation of silting objects and a partial order on the set of basic silting objects, 
which have been introduced by Iyama and the author \cite{AI}. 
We give two different Bongartz-type Lemmas (Proposition \ref{Bongartz-type} and Theorem \ref{finite mutation1}). 
They play an important role in later sections. 

In Section \ref{silting transitivity}, we consider silting transitivity of silting objects. 
We give a sufficient condition for two silting objects 
to be iterated irreducible silting mutation of each other (Theorem \ref{finite mutation2}). 
This is the first main step in the proof of our main theorem.

In Section \ref{torsion}, we show that any covariantly finite torsion class gives rise to a silting object 
(Theorem \ref{silting induced by torsion pair}). 
This is the second main step in the proof of our main theorem.
Our construction is generalization of that of Okuyama-Rickard complexes (cf. \cite{AI, O}): see Example \ref{Okuyama-Rickard}. 

In Section \ref{symmetric}, we prove our main theorem: 
that any representation-finite symmetric algebra is tilting-connected (Theorem \ref{symm finite case}). 
The important result (Proposition \ref{A}) seems to be very interesting by itself. 
Moreover we give some examples of the behavior of silting mutation. 

\medskip
{\bf Notation}
Let $\TT$ be an additive category.
For an object $X$ of $\TT$,
we denote by $\add X$ the smallest full subcategory of $\TT$ containing $X$
which is closed under finite direct sums, summands and isomorphisms. 

Let $A$ be a finite dimensional algebra over a field. 
We denote by $\mod A$ ($\proj A$) the category of finitely generated (projective) right $A$-modules and 
by $\K^{\rm b}(\mod A)$ and $\K^{\rm b}(\proj A)$ the bounded homotopy category of $\mod A$ and $\proj A$, respectively. 
For an integer $i$, we denote by $H^i:\K^{\rm b}(\mod A)\to\mod A$ the $i$-th cohomological functor. 
An $A$-module means a finitely generated right $A$-module. 
We always assume that an algebra is basic. 

\begin{ac}
The author would like to express his deep gratitude to Osamu Iyama and Joseph Grant
who read the paper carefully and gave a lot of helpful comments and suggestions.
\end{ac}


\section{Silting objects and silting mutation}\label{silting}

\subsection{Preliminaries}

The aim of this subsection is to study silting mutation and a partial order in the sense of \cite{AI} 
and to give some results which are necessary for the rest of this paper. 

Throughout this paper let $\TT$ be a triangulated category and we assume the following:
\begin{assumption}\label{assumption:triangulated cat}
$\TT$ is Krull-Schmidt, 
$k$-linear for a field $k$ and 
\emph{Hom-finite}, that is, ${\rm dim}_{k}\homo{\TT}{X}{Y}<\infty$ for all objects $X, Y$ in $\TT$. 
\end{assumption}

Let $X$ be an object of $\TT$. 
We denote by $\thick X$ the smallest thick subcategory of $\TT$ containing $X$. 
We say that $X$ is \emph{basic} if the endomorphism algebra of $X$ is basic. 

Let us start with the definition of silting objects.

\begin{definition}
Let $T$ be an object of $\TT$. 
\begin{enumerate}[(a)]
\item We say that $T$ is \emph{presilting} 
if $\homo{\TT}{T}{T[i]}=0$ for any positive integer $i>0$. 
\item We say that $T$ is \emph{silting} 
if it is presilting 
and satisfies $\TT=\thick{T}$. 
We denote by $\silt{\TT}$ 
the isomorphism classes of basic silting 
objects in $\TT$. 
\item We say that $T$ is \emph{partial silting} 
if it is a direct summand of a silting 
object. 
\end{enumerate}
\end{definition}

We say that a morphism $f:X\to Y$ is \emph{left minimal} 
if any morphism $g:Y\to Y$ satisfying $gf=f$ is an isomorphism. 
Dually we define a \emph{right minimal} morphism.

Let $\MM$ be a full subcategory of $\TT$.
We say that a morphism $f:X\to M$ is a \emph{left $\MM$-approximation} of $X$ 
if $M$ belongs to $\MM$ and $\homo{\TT}{f}{M'}$ is surjective for any object $M'$ in $\MM$. 
We say that $\MM$ is \emph{covariantly finite} if any object in $\TT$ has a left $\MM$-approximation. 
Dually, we define a \emph{right $\MM$-approximation} and a \emph{contravariantly finite subcategory}. 
We say that $\MM$ is \emph{functorially finite} if it is covariantly and contravariantly finite. 

Assumption \ref{assumption:triangulated cat} implies that 
$\add M$ is a functorially finite subcategory for any object $M$ in $\TT$. 
To see this, let $X$ be an object of $\TT$ and $\{f_1,\cdots,f_n\}$ a $k$-basis of the $k$-vector space $\homo{\TT}{X}{M}$. 
Then the morphism 
\[\def\arraystretch{0.8}
\left[\begin{array}{c}
f_1\\f_2\\\vdots\\f_n\end{array}
\right]
:X\to M^{\oplus n}
\] 
is a left $\add M$-approximation of $X$, 
which implies that $\add M$ is covariantly finite in $\TT$. 
By a similar argument, we see that $\add M$ is contravariantly finite in $\TT$. 

Now we define silting mutation. 

\begin{definitiontheorem}\label{def:silting mutation}\cite{AI}
Let $T$ be a basic silting object in $\TT$. 
For a composition $T=X\oplus M$, we take a triangle 
\[\xymatrix{
X \ar[r]^{f} & M' \ar[r] & Y \ar[r] & X[1]
}\]
with a minimal left $\add{M}$-approximation $f:X\to M'$ of $X$. 
Then $\mu_{X}^{+}(T):=Y\oplus M$ is again a basic silting object,
and we call it a \emph{left mutation} of $T$ with respect to $X$. 
\end{definitiontheorem}

Dually we define a \emph{right mutation} $\mu_{X}^{-}(T)$ of $T$ with respect to $X$. 
We will say that \emph{(Silting) mutation} to mean either left or right mutation. 
We say that mutation is \emph{irreducible} if $X$ is indecomposable.


The following definition is very useful. 

\begin{definition}
%
For any objects $M, N$ of $\TT$, we write $M\geq N$ if $\homo{\TT}{M}{N[i]}=0$ for any positive integer $i>0$. 
\end{definition}


\begin{remark}
Note that the relation $\ge$ on objects of $\TT$ is far
from being a partial order. Nevertheless we use the notation $\ge$
since it is very simple and moreover $\geq$ is a partial order
on the set $\silt\TT$ by the following result.
\end{remark}

\begin{theorem}\label{partial order}\cite[Theorem 2.11, Proposition 2.14, Theorem 2.35]{AI}
The following hold:
\begin{enumerate}[{\rm (1)}]
\item $\geq$ gives a partial order on $\silt\TT$.
\item Let $T, U$ be basic silting objects in $\TT$. 
\begin{enumerate}[{\rm (i)}]
\item $T\geq U$ if and only if any object $X$ of $\TT$ with $U\geq X$ satisfies $T\geq X$.
\item If $U$ belongs to $\add T$, then $U$ is isomorphic to $T$.
\item The following are equivalent:
\begin{enumerate}[{\rm (a)}]
\item $U$ is an irreducible left mutation of $T$;
\item $T>U$ and there is no basic silting object $V$ in $\TT$ satisfying $T>V>U$. 
\end{enumerate}
\end{enumerate}
\end{enumerate}
\end{theorem}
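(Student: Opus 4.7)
The plan is to handle the three numbered parts in the order (1)-reflexivity/transitivity, (2)(i), (2)(ii), (1)-antisymmetry, (2)(iii), because antisymmetry becomes clean once (2)(ii) is available.

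Reflexivity of $\geq$ on $\silt\TT$ is immediate from the presilting condition. For transitivity with $T\geq U\geq V$ all silting, the strategy is to use that $V$, belonging to $\thick U$, admits a finite filtration by cones of morphisms between objects in $\add U$, with only non-positive shifts of $U$ appearing because $U\geq V$. An induction on the length of this filtration, invoking $\Hom(T,U[i])=0$ for $i>0$ in each triangle, yields $\Hom(T,V[i])=0$. The same induction applied with arbitrary $X\in\TT$ in place of $V$ produces the non-trivial direction of (2)(i); the converse is immediate by substituting $X=U$ and using reflexivity.

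For (2)(ii), given $U\in\add T$ with both basic silting, build a minimal right $\add U$-resolution of $T$ by iterating right $\add U$-approximations. Finiteness follows from $T\in\thick U$ together with minimality, and silting-ness of $U$ then forces $T\in\add U$; basic-ness yields $T\cong U$. Antisymmetry in (1) now falls out quickly: if $T\geq U$ and $U\geq T$, the cone of a minimal right $\add U$-approximation of $T$ satisfies $\Hom(U,-[i])$-vanishing for all $i\in\Z$, hence is zero, giving $T\in\add U$ and then $T\cong U$ via (2)(ii).

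For (2)(iii), direction (a)$\Rightarrow$(b) proceeds via the mutation triangle $X\xrightarrow{f}M'\to Y\to X[1]$, writing $T=X\oplus M$ with $X$ indecomposable and $U=Y\oplus M$. The triangle certifies $T\geq U$, with strictness because indecomposability of $X$ together with $f$ being a non-split left $\add M$-approximation precludes $X$ from being a summand of $Y\oplus M$. For minimality, any intermediate $T>V>U$ must contain $M$ as a summand by (2)(i), and analysing the complementary indecomposable summand via the approximation triangle shows it can only be $X$ or $Y$, returning $V=T$ or $V=U$. For (b)$\Rightarrow$(a), pick an indecomposable summand $X$ of $T$ not occurring in $U$ (which exists by (2)(ii) applied contrapositively to $T\not\cong U$), form $\mu_X^+(T)$, and invoke the auxiliary fact that $T>U$ implies $\mu_X^+(T)\geq U$; then $T>\mu_X^+(T)\geq U$ together with the covering hypothesis forces $\mu_X^+(T)=U$.

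The main obstacle is the auxiliary fact used in (2)(iii)(b)$\Rightarrow$(a): that left mutation of $T$ at an indecomposable summand not lying in $U$ preserves the relation $\geq U$. Establishing it requires the long exact sequence obtained by applying $\Hom(-,U[i])$ to the approximation triangle, where the critical $i=1$ case reduces to lifting morphisms $X\to U$ through $M'$, a lifting that succeeds because $M$ contains enough summands to approximate $U$ from the appropriate side. A secondary concern is termination of the minimal $\add U$-resolution in (2)(ii), which leverages that $T\in\thick U$ admits a bounded presentation in $\add U$-shifts when $U$ is silting.
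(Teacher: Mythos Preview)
The paper does not prove this theorem at all; it is quoted from \cite{AI} with no argument supplied. There is therefore nothing in the present paper to compare your outline against.

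That said, your sketch has real gaps. The most serious is in (2)(iii)(b)$\Rightarrow$(a): you select an \emph{arbitrary} indecomposable summand $X$ of $T$ with $X\notin\add U$ and then assert $\mu_X^+(T)\geq U$. The lifting you describe in your final paragraph---factoring any $a:X\to U$ through the left approximation $f:X\to M'$---does not go through for such an arbitrary $X$. One first lifts $a$ through a minimal right $\add T$-approximation $T_0'\to U$, but nothing you have said rules out $X\in\add T_0'$, and when $X$ does occur in $T_0'$ the resulting map $X\to T_0'$ need not factor through $\add M$. The argument that actually works (recorded in this paper as Proposition~\ref{make closer}) does not take $X$ arbitrary: it chooses $X$ from the top term $T_\ell$ of the minimal $\add T$-resolution of $U$ and then uses Lemma~\ref{no common summand} to force $X\notin\add T_0'$, whence $T_0'\in\add M$ and the factorisation through $f$ succeeds. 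Your phrase ``$M$ contains enough summands to approximate $U$ from the appropriate side'' does not capture this mechanism.

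There are two further soft spots. In (2)(iii)(a)$\Rightarrow$(b) you claim that any intermediate $V$ must contain $M$ as a summand ``by (2)(i)'', but (2)(i) is a statement about the relation $\geq$ and says nothing about summands; this inference is unjustified as written. And in your antisymmetry argument you assert $\Hom_\TT(U,C[i])=0$ for all $i\in\Z$, where $C$ is the cocone of a minimal right $\add U$-approximation of $T$; the range $i\geq 1$ follows as you indicate (using $U\geq T$ together with the approximation property for $i=1$), but for $i\leq 0$ the long exact sequence has nonzero outer terms and you have not explained how the hypothesis $T\geq U$ is brought to bear.
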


We observe the relation $\geq$ on objects of $\K^{\rm b}(\proj A)$. 

For a finite dimensional algebra $A$ over a field $k$, 
we denote by $D:=\Hom_k(-,k):\mod A\leftrightarrow\mod A^{\rm op}$ the $k$-duality, 
where $A^{\rm op}$ is the opposite algebra of $A$. 
We denote by $\nu:=D\homo{A}{-}{A}:\mod A\to\mod A$ the Nakayama functor of $A$ 
and put $\nu^{-1}:=\homo{A}{DA}{-}$. 

The following result is useful to calculate $k$-vector spaces $\homo{\K^{\rm b}(\proj A)}{P}{Q}$ on $\K^{\rm b}(\proj A)$. 

\begin{lemma}\cite{H,HK}\label{Serre}
Let $A$ be a finite dimensional algebra over a field. 
Let $X$ be an object of $\K^{\rm b}(\mod A)$ and $P$ an object of $\K^{\rm b}(\proj A)$. 
Then we have a bifunctorial isomorphism 
\[\homo{\K^{\rm b}(\mod A)}{P}{X}\simeq D\homo{\K^{\rm b}(\mod A)}{X}{\nu P}.\]
\end{lemma}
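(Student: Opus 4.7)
The plan is to prove this Serre-type duality by dévissage in the variable $P$. Both sides are contravariant cohomological functors of $P$ which commute with shifts and finite direct sums: the left-hand side manifestly, and the right-hand side because $\nu:\K^{\rm b}(\proj A)\to\K^{\rm b}(\mod A)$ is a triangulated additive functor and $D$ is an exact contravariant duality on finite-dimensional $k$-vector spaces. Since $\K^{\rm b}(\proj A)=\thick A$, it suffices to construct a bifunctorial natural transformation between the two sides and verify it is an isomorphism for $P=A$; the five-lemma applied to the long exact sequences coming from triangles in $\K^{\rm b}(\proj A)$ then propagates the isomorphism to all $P$.

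For $P=A$, the left-hand side is $\homo{\K^{\rm b}(\mod A)}{A}{X}\cong H^0(X)$. For the right-hand side, $\nu A=DA$ is an injective $A$-module viewed as a stalk complex in degree zero, so I would analyze $\homo{\K^{\rm b}(\mod A)}{X}{DA}$ directly at the level of chain maps: such a chain map is determined by a morphism $f_0:X^0\to DA$ vanishing on the image of the differential $d^{-1}$, and two such maps are homotopic precisely when they agree after restriction to the cycles $Z^0(X)$. Injectivity of $DA$ identifies the resulting space with $\homo{A}{H^0(X)}{DA}$, which equals $DH^0(X)$ via the standard duality $\homo{A}{M}{DA}\cong DM$ for $M\in\mod A$. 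Applying the outer $D$ then recovers $H^0(X)$, matching the left-hand side.

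The remaining and subtlest task is to pin down a single natural transformation realizing this bifunctorial isomorphism rather than merely identifying both sides pointwise. The natural candidate is the Yoneda-type pairing induced by composition
\[\homo{\K^{\rm b}(\mod A)}{P}{X}\otimes_k\homo{\K^{\rm b}(\mod A)}{X}{\nu P}\to \homo{\K^{\rm b}(\mod A)}{P}{\nu P}\]
followed by a canonical trace $\homo{\K^{\rm b}(\mod A)}{P}{\nu P}\to k$, obtained for $P=A$ from the evaluation $DA\otimes_A A\to k$ and extended to finitely generated projectives by additivity and to bounded complexes in the standard way. Once this pairing is shown to be perfect for $P=A$ by the computation of the previous paragraph, the dévissage along $\thick A=\K^{\rm b}(\proj A)$ yields the stated isomorphism in full generality.
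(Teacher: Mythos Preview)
The paper does not supply a proof of this lemma: it is stated with the citation \cite{H,HK} and used as a black box throughout. So there is nothing in the paper to compare your argument against.

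Your argument is correct and is essentially the standard one. The d\'evissage strategy is sound: both sides are cohomological in $P$, $\K^{\rm b}(\proj A)=\thick A$, and the five-lemma propagates the base case. Your computation for $P=A$ is right: chain maps $X\to DA$ are maps $X^0\to DA$ killing $\operatorname{im}d^{-1}$, homotopies identify maps agreeing on $Z^0(X)$ (using injectivity of $DA$ to extend the homotopy from $\operatorname{im}d^0$ to $X^1$), and the quotient is $\homo{A}{H^0(X)}{DA}\cong DH^0(X)$.

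The only place you are slightly informal is the trace. For a single finitely generated projective $P$ one has $\homo{A}{P}{\nu P}\cong D\ehomo{A}{P}$ via the natural isomorphism $\homo{A}{P}{A}\otimes_A P\cong\ehomo{A}{P}$, and the trace is then evaluation at $\mathrm{id}_P$; this description makes the extension to complexes transparent and manifestly natural in both variables. With that said, your ``extend by additivity and then to complexes'' is exactly how one does it, so there is no real gap.
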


\begin{example}\label{symmetric silting=tilting}\cite[Example 2.8]{AI}
If $A$ is a symmetric algebra, 
then any (pre)silting object is (pre)tilting. 
Indeed  by Lemma \ref{Serre}, 
the condition that $A$ is symmetric implies that there exists an isomorphism $\homo{\TT}{T}{T[i]}\simeq D\homo{\TT}{T}{T[-i]}$ 
for all integers $i$. 
\end{example}

Let $X$ be a non-zero complex in $\K^{\rm b}(\proj A)$ for a finite dimensional algebra $A$. 
We define the \emph{length} of $X$ as $\length{X}=b-a+1$ whenever $X^i=0$ for $i<a\leq b<i$ and $X^a\neq0, X^b\neq0$. 

The observation below shows that $\geq$ is closely related to the length of a complex. 

\begin{proposition}\label{prop:partial order and length}
Let $A$ be a finite dimensional algebra over a field and $\TT:=\K^{\rm b}(\proj A)$. 
Let $X$ be an object of $\TT$ and $\ell>0$. 
Then the following are equivalent:
\begin{enumerate}[{\rm (1)}]
\item $\length{X}\leq \ell$;
\item $A[n]\geq X\geq A[\ell+n-1]$ for some integer $n$.
\end{enumerate}
\end{proposition}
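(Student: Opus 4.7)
The plan is to pass to a minimal representative of $X$ and translate each of the two inequalities in (2) into explicit cohomological vanishing conditions.

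First I would replace $X$ by its essentially unique minimal representative in $\K^{\rm b}(\proj A)$, so that $X^a, X^b \neq 0$, $X^j = 0$ outside $[a,b]$, all differentials lie in the Jacobson radical, and $\length{X} = b - a + 1$.

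For the left-hand inequality, the elementary identity
\[
\Hom_{\K^{\rm b}(\proj A)}(A, Y[i]) = H^i(Y) \qquad (Y \in \K^{\rm b}(\proj A),\ i \in \Z)
\]
gives $A[n] \geq X \Longleftrightarrow H^j(X) = 0$ for all $j > -n$. Minimality forces $H^b(X) \neq 0$: indeed the quotient $X^b / \mathrm{im}\, d^{b-1}_X$ surjects onto $X^b / \mathrm{rad}\, X^b \neq 0$. Hence
\[
A[n] \geq X \Longleftrightarrow n \leq -b.
\]

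For the right-hand inequality, consider the dual complex $X^\vee := \Hom_A^\bullet(X, A) \in \K^{\rm b}(\proj A^{\mathrm{op}})$ with $(X^\vee)^j = \Hom_A(X^{-j}, A)$, concentrated in degrees $[-b, -a]$. A direct computation (using that a chain map $X \to A[m]$ is specified by a map $X^{-m} \to A$ vanishing on $\mathrm{im}\, d_X^{-m-1}$, modulo factorizations through $d_X^{-m}$) yields
\[
\Hom_{\K^{\rm b}(\proj A)}(X, A[m]) = H^m(X^\vee).
\]
Since the duality $\Hom_A(-, A) : \proj A \to \proj A^{\mathrm{op}}$ sends maps with image in the radical to maps with image in the radical, $X^\vee$ inherits minimality from $X$, whence $H^{-a}(X^\vee) \neq 0$ by the same argument. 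Therefore
\[
X \geq A[m] \Longleftrightarrow m \geq -a.
\]

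Combining, an integer $n$ with $A[n] \geq X \geq A[\ell + n - 1]$ exists iff the range $-a + 1 - \ell \leq n \leq -b$ is non-empty, iff $b - a + 1 \leq \ell$, iff $\length{X} \leq \ell$. The main obstacle is the careful transfer of minimality from $X$ to $X^\vee$, which guarantees the non-vanishing of the top cohomology at the endpoint; once this is in place, the equivalence is purely arithmetic.
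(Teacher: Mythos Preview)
Your argument is correct and takes a genuinely different route from the paper's proof. You work with a minimal representative of $X$ and the $A$-linear dual $X^\vee = \Hom_A^\bullet(X,A)$, using that minimality is preserved under this duality to pin down exactly which shifts $n$ satisfy each inequality. The paper instead proves only the existence of suitable truncations: for $A\geq X$ it uses $H^i(X)=0$ for $i>0$ to truncate above, and for $X\geq A[\ell-1]$ it invokes Serre duality (Lemma~\ref{Serre}) to pass to $\nu X$, whose terms are injective, and truncates below via vanishing of $H^i(\nu X)$ for $i\leq -\ell$.

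Your approach is more elementary in that it avoids the Nakayama functor and Serre duality entirely, and it yields the sharper statement that the admissible $n$ in (2) are precisely those in $[-a+1-\ell,\,-b]$. The paper's approach, by contrast, fits naturally into the machinery it has already developed and does not need the existence or properties of minimal complexes. Both are clean; yours is arguably the more self-contained argument.
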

\begin{proof}
We show the implication (1)$\Rightarrow$(2). 
Since $\length{X}\leq\ell$, 
we can write 
\[X=(\cdots\to0\to X^{-\ell-n+1}\to\cdots\to X^{-n}\to0\to\cdots)\] 
for some $n\in\mathbb{Z}$. 
This immediately implies $A[n]\geq X\geq A[\ell+n-1]$. 

We show the implication (2)$\Rightarrow$(1). 
Applying shifts, we can assume $n=0$. 
As $A\geq X$, we observe $H^i(X)=0$ for any $i>0$. 
Since any term of $X$ is a projective module, 
we see that $X$ is isomorphic to a complex $(\cdots\to Y^{-1}\to Y^0\to0\to\cdots)$. 
By Lemma \ref{Serre}, we have an isomorphism $\homo{\K^{\rm b}(\mod A)}{A}{\nu X[i]}\simeq D\homo{\TT}{X[i]}{A}$.
As $X\geq A[\ell-1]$, we see $H^i(\nu X)=0$ for any $i\leq-\ell$. 
Since any term of $\nu X$ is an injective module, 
we obtain that $\nu X$ is isomorphic to a complex $(\cdots\to0\to Z^{-\ell+1}\to Z^{-\ell+2}\to\cdots)$. 
This implies $X\simeq (\cdots\to0\to \nu^{-1}Z^{-\ell+1}\to \nu^{-1}Z^{-\ell+2}\to\cdots)$. 
Thus $X$ can be represented by a complex 
\[(\cdots\to0\to X^{-\ell+1}\to\cdots\to X^0\to0\to\cdots).\]
Hence the assertion holds. 
\end{proof}

The following results play an important role later. 

\begin{proposition}\cite[Proposition 2.23]{AI}\label{resolution}
Let $T$ be in $\silt\TT$. For any object $U$ of $\TT$ with $T\geq U=U_0$, we have triangles
\[\xymatrix@R=0.2cm{
U_1\ar[r]^{g_1}&T_0\ar[r]^{f_0}&U_0\ar[r]&U_1[1],\\
\cdots,\\
U_\ell\ar[r]^{g_\ell}&T_{\ell-1}\ar[r]^{f_{\ell-1}}&U_{\ell-1}\ar[r]&U_\ell[1],\\
0\ar[r]^{g_{\ell+1}}&T_\ell\ar[r]^{f_\ell}&U_\ell\ar[r]&0,
}\]
for some $\ell\ge0$ such that $f_i$ is a minimal right $\add{T}$-approximation and
$g_{i+1}$ belongs to $J_{\TT}$ for any $0\le i\le \ell$ 
where $J_{\TT}$ is the Jacobson radical of $\TT$.
\end{proposition}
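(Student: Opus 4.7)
The plan is to build the tower of triangles inductively by repeatedly taking minimal right $\add T$-approximations, track the relation $T\geq U_i$ through the construction, and argue that the procedure terminates after finitely many steps. At stage $i$, since $\add T$ is contravariantly finite in $\TT$ (as observed just after Assumption~\ref{assumption:triangulated cat}), I can choose a minimal right $\add T$-approximation $f_i : T_i\to U_i$ and complete it to a triangle $U_{i+1}\xrightarrow{g_{i+1}} T_i\xrightarrow{f_i} U_i\to U_{i+1}[1]$; the next stage is then performed on $U_{i+1}$.

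To propagate the relation $T\geq U_i$, I would induct on $i$, the base case being the hypothesis $T\geq U_0$. Applying $\Hom(T,-)$ to the triangle at stage $i$ yields a long exact sequence; the terms $\Hom(T,T_i[j])$ vanish for $j>0$ because $T$ is presilting, and $\Hom(T,U_i[j])$ vanish for $j>0$ by the inductive hypothesis. This already forces $\Hom(T,U_{i+1}[j])=0$ for $j>1$, while for the remaining case $j=1$ the approximation property of $f_i$ makes $\Hom(T,T_i)\to\Hom(T,U_i)$ surjective, so the connecting map into $\Hom(T,U_{i+1}[1])$ vanishes. Hence $T\geq U_{i+1}$.

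The Jacobson radical condition on $g_{i+1}$ is a standard consequence of the right minimality of $f_i$ combined with the Krull--Schmidt hypothesis: if $g_{i+1}$ had a component lying outside $J_{\TT}$, then under Krull--Schmidt one could split off an indecomposable summand of $T_i$ and cancel it from $f_i$ while still having an $\add T$-approximation, contradicting minimality of $f_i$.

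The main obstacle is the termination step. Here one must invoke $\thick T=\TT$ in an effective way: although nothing so far prevents the construction from continuing indefinitely, the fact that $U\in\thick T$ places $U$ in a bounded range of the co-$t$-structure associated with the silting object $T$. Each triangle shifts $U_{i+1}$ deeper into this range, while the relation $T\geq U_{i+1}$ established above bounds it from the opposite side, so after finitely many steps one reaches a $U_\ell$ already lying in $\add T$. For such an object the minimal right $\add T$-approximation is an isomorphism, which produces the final triangle $0\to T_\ell\to U_\ell\to 0$ and closes the resolution.
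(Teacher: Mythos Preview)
The paper does not supply a proof here; the proposition is quoted from \cite[Proposition~2.23]{AI} and used as a black box. Your outline is the standard argument (and essentially the one given in \cite{AI}): iterate minimal right $\add T$-approximations, propagate $T\ge U_i$ through the long exact sequence using surjectivity of $\Hom_\TT(T,f_i)$ at the boundary degree, and read off $g_{i+1}\in J_\TT$ from right minimality under the Krull--Schmidt hypothesis.

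The only soft spot is termination. Invoking the bounded co-$t$-structure associated with the silting object $T$ is the correct move, but ``each triangle shifts $U_{i+1}$ deeper'' is an assertion rather than a computation. To make it precise: since $\thick T=\TT$, there is an $n\ge 0$ with $U\ge T[n]$, and applying $\Hom_\TT(-,T)$ to the triangles (the same bookkeeping you already did for $\Hom_\TT(T,-)$) yields $U_i\ge T[n-i]$ for $0\le i\le n$, whence $T\ge U_n\ge T$. At that point you still need the fact that the coheart of the silting co-$t$-structure is exactly $\add T$, i.e.\ that $T\ge X\ge T$ forces $X\in\add T$; this is a separate short statement established in \cite{AI} alongside the resolution itself, and it is what guarantees that the minimal approximation $f_n$ is an isomorphism. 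With that dependence made explicit your argument is complete.
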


\begin{lemma}\cite[Lemma 2.24]{AI}\label{no common summand}
Let $T$ be in $\silt\TT$ and $U_0,U'_0$ be objects of $\TT$ such that $T\geq U_0,U_0'$.
For $U_0$, we take $\ell\ge0$ and so obtain triangles as in Proposition \ref{resolution}.
Also for $U_0'$, we get triangles 
\[\xymatrix@R=0.2cm{
U_1'\ar[r]^{g_1'}&T_0'\ar[r]^{f_0'}&U_0'\ar[r]&U_1'[1],\\
&\cdots,\\
U_{\ell'}'\ar[r]^{g_\ell'}&T_{\ell'-1}'\ar[r]^{f_{\ell'-1}'}&U_{\ell'-1}'\ar[r]&U_{\ell'}'[1],\\
0\ar[r]^{g_{\ell'+1}'}&T_{\ell'}'\ar[r]^{f_{\ell'}'}&U_{\ell'}'\ar[r]&0,
}\]
satisfying the same properties.
If $\Hom_{\TT}(U_0,U_0'[\ell])=0$ holds, then we have $(\add T_\ell)\cap(\add T_0')=0$.
\end{lemma}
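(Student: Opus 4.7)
My plan is to argue by contradiction: assume there exists a nonzero indecomposable $S$ with $S \in \add T_\ell \cap \add T_0'$, and produce a nonzero morphism $U_0 \to U_0'[\ell]$ that contradicts the hypothesis. Writing $T_\ell = S \oplus T_\ell''$ and $T_0' = S \oplus T_0''$, let $p \colon T_\ell \twoheadrightarrow S$ and $i \colon S \hookrightarrow T_0'$ be the split projection and inclusion, and put $\phi := f_0' \circ i \circ p \colon T_\ell \to U_0'$; this will be our candidate.

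The first step is to compute $\Hom_\TT(U_0, U_0'[\ell])$ via the resolution of $U_0$ from Proposition~\ref{resolution}. Applying $\Hom_\TT(-, U_0'[\ast])$ iteratively to the successive triangles and invoking $T \geq U_0'$, which kills $\Hom_\TT(T_j, U_0'[n])$ for $n \geq 1$, a routine dévissage yields, for $\ell \geq 1$, a natural isomorphism
\[
\Hom_\TT(U_0, U_0'[\ell]) \;\simeq\; \Hom_\TT(T_\ell, U_0') \big/ g_\ell^{\ast}\Hom_\TT(T_{\ell-1}, U_0'),
\]
where $g_\ell^{\ast}$ denotes precomposition with $g_\ell \colon U_\ell = T_\ell \to T_{\ell-1}$. (The case $\ell = 0$ is direct: $U_0 \simeq T_0$ and the hypothesis reads $\Hom_\TT(T_0, U_0') = 0$.)

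Next I would show $[\phi]$ is nonzero in the above quotient. Suppose for contradiction $\phi = h \circ g_\ell$ for some $h \colon T_{\ell-1} \to U_0'$. Since $f_0'$ is a right $\add T$-approximation and $T_{\ell-1} \in \add T$, we can lift $h = f_0' \circ h'$ with $h' \colon T_{\ell-1} \to T_0'$. Then $f_0' \circ (ip - h' g_\ell) = 0$, so the triangle $U_1' \xrightarrow{g_1'} T_0' \xrightarrow{f_0'} U_0'$ factors $ip - h' g_\ell = g_1' \circ k'$ for some $k' \colon T_\ell \to U_1'$. Hence
\[
ip \;=\; h'\,g_\ell + g_1'\,k' \;\in\; J_\TT(T_\ell, T_0'),
\]
because $g_\ell, g_1' \in J_\TT$ by Proposition~\ref{resolution} and $J_\TT$ is a two-sided ideal. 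However, $ip \notin J_\TT(T_\ell, T_0')$: taking $q \colon T_0' \twoheadrightarrow S \hookrightarrow T_\ell$ through the common summand, one checks that $q \circ i \circ p$ is exactly the idempotent of $T_\ell$ with image $S$, so $\mathrm{id}_{T_\ell} - q(ip)$ is not an isomorphism whenever $S \neq 0$. This contradiction forces $[\phi] \neq 0$, contradicting the hypothesis $\Hom_\TT(U_0, U_0'[\ell]) = 0$.

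The main obstacle I foresee is the delicate interplay between minimality and the Jacobson radical. Everything hinges on Proposition~\ref{resolution} placing both $g_\ell$ (from the resolution of $U_0$) and $g_1'$ (from the resolution of $U_0'$) inside $J_\TT$; this is what traps the error term $h'g_\ell + g_1'k'$ in the radical while the candidate $ip$, built from an actual common summand, stays outside it. The dévissage of the first step is mechanical once one observes that all boundary terms except the final one vanish by $T \geq U_0'$, so the real content of the lemma is the radical-versus-idempotent dichotomy used in the last step.
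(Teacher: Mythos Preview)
The paper does not give its own proof of this lemma; it is quoted verbatim from \cite[Lemma 2.24]{AI}. Your argument is correct and is essentially the proof one finds there: the d\'evissage identification
\[
\Hom_\TT(U_0,U_0'[\ell])\;\simeq\;\Hom_\TT(T_\ell,U_0')\big/g_\ell^{\ast}\Hom_\TT(T_{\ell-1},U_0')
\]
via the triangles of Proposition~\ref{resolution}, together with the observation that any factorisation of the candidate $\phi=f_0'\circ i\circ p$ through $g_\ell$ forces $ip$ into $J_\TT(T_\ell,T_0')$ (using that both $g_\ell$ and $g_1'$ lie in $J_\TT$), contradicting the presence of the common summand $S$. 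One small remark: in the $\ell=0$ case you should say explicitly why $f_0'\circ i\neq 0$; this is where right minimality of $f_0'$ is used, since otherwise the projection killing $S$ would contradict minimality.
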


We improve the result \cite[Proposition 2.36]{AI}. 

\begin{proposition}\label{make closer}
Let $T$ be in $\silt{\TT}$ and let $U$ be a presilting object of $\TT$ which does not belong to $\add T$. 
If $T\geq U$, 
then there exists an irreducible left mutation $P$ of $T$ such that $T>P\geq U$.
\end{proposition}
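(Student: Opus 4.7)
The plan is to use the $\add T$-resolution of $U$ supplied by Proposition \ref{resolution}, combined with Lemma \ref{no common summand}, to single out an indecomposable summand of $T$ whose left mutation produces the desired $P$.

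Since $T\geq U$ but $U\notin\add T$, choosing $\ell\geq 0$ minimal in Proposition \ref{resolution} forces $\ell\geq 1$ (and $T_\ell\neq 0$). Apply Lemma \ref{no common summand} with $U_0=U_0'=U$: since $U$ is presilting and $\ell\geq 1$, we have $\homo{\TT}{U}{U[\ell]}=0$, so $(\add T_\ell)\cap(\add T_0)=0$. Pick an indecomposable summand $X$ of $T_\ell$; then $X$ is an indecomposable summand of $T$ that is not a summand of $T_0$, so writing $T=X\oplus M$ we have $T_0\in\add M$. Let $P:=\mu_X^{+}(T)=Y\oplus M$ be defined by a triangle
\[
X\xrightarrow{f}M'\to Y\to X[1]
\]
with $f$ a minimal left $\add M$-approximation. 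The relation $T>P$ is then immediate from Theorem \ref{partial order}.

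It remains to check $P\geq U$, and since $M\in\add T$ gives $\homo{\TT}{M}{U[i]}=0$ for $i>0$, it suffices to show $\homo{\TT}{Y}{U[i]}=0$ for $i>0$. Applying $\homo{\TT}{-}{U[i]}$ to the defining triangle, the terms $\homo{\TT}{M'}{U[i]}$ and, for $i\geq 2$, $\homo{\TT}{X}{U[i-1]}$ vanish because $X,M'\in\add T$ and $T\geq U$; this handles $i\geq 2$. For $i=1$ the long exact sequence collapses to
\[
\homo{\TT}{M'}{U}\xrightarrow{f^{*}}\homo{\TT}{X}{U}\to\homo{\TT}{Y}{U[1]}\to 0,
\]
so the problem reduces to showing that $f^{*}$ is surjective, i.e., that every $\alpha\colon X\to U$ factors through $f$.

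This factorization is the one step where care is needed, and it is where the choice of $X$ pays off. Given $\alpha\colon X\to U$, the right $\add T$-approximation property of $f_0\colon T_0\to U$ produces $\beta\colon X\to T_0$ with $\alpha=f_0\circ\beta$; since $T_0\in\add M$ and $f$ is a left $\add M$-approximation, $\beta$ factors through $f$, and hence so does $\alpha$. The whole argument hinges on Lemma \ref{no common summand}, applied to $U$ with itself: the presilting condition on $U$ separates the two ends of the resolution, which is precisely what makes the right approximation at the bottom of the resolution and the left approximation defining the mutation dovetail.
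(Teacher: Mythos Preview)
Your proof is correct and follows essentially the same route as the paper's own argument: take the minimal $\add T$-resolution of $U$ from Proposition~\ref{resolution}, apply Lemma~\ref{no common summand} with $U_0=U_0'=U$ to get $(\add T_\ell)\cap(\add T_0)=0$, mutate at an indecomposable summand $X$ of $T_\ell$, and then verify $P\geq U$ by the same factorization of a map $X\to U$ through the right approximation $T_0\to U$ and the left approximation $f$. The only cosmetic difference is that the paper keeps the primed notation $T_0'$ from Lemma~\ref{no common summand}, whereas you identify $T_0'=T_0$ directly; the logic is identical.
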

\begin{proof}
This is a simple modification of the proof of \cite[Proposition 2.36]{AI}, 
but for the convenience of the reader we give full details here. 
Since $U\not\in\add{T}$ and $T\geq U$,
we obtain triangles as in Proposition \ref{resolution} with $\ell>0$. 
Now take an indecomposable object $X$ of $T_\ell$ and put $P:=\mu^{+}_{X}(T)$. 
By Theorem \ref{partial order}, we have $T>P$. 
To show $P\geq U$, 
we consider the triangle as in Definition-Theorem \ref{def:silting mutation}, 
i.e., $T=X\oplus M, P=Y\oplus M$ and the triangle $X\xrightarrow{f}M'\to Y\to X[1]$. 
Since we have an exact sequence 
\[\homo{\TT}{X}{U[i]}\to\homo{\TT}{Y}{U[i+1]}\to\homo{\TT}{M'}{U[i+1]},\]
we find that $\homo{\TT}{P}{U[i]}=0$ for any $i>1$. 
Thus it remains to prove $\homo{\TT}{P}{U[1]}=0$. 
Since we have an exact sequence 
\[\homo{\TT}{M'}{U}\stackrel{\cdot f}{\to}\homo{\TT}{X}{U}\to\homo{\TT}{Y}{U[1]}\to\homo{\TT}{M'}{U[1]}=0,\]
we only need to show that $\homo{\TT}{M'}{U}\stackrel{\cdot f}{\to}\homo{\TT}{X}{U}$ is surjective. 
Fix $a:X\to U$ and consider a diagram 
\[\xymatrix{
& Y[-1] \ar[r] & X \ar[r]^{f}\ar[d]^{a} & M' \\
U'_1 \ar[r]_{g'_1} & T'_0 \ar[r]_{f'_0} & U \ar[r] & U'_1[1]
}\]
where the lower triangle is given in Lemma \ref{no common summand} as $U'_0=U$. 
Since $f'_0$ is a right $\add{T}$-approximation, 
we get a morphism $b:X\to T'_0$ with $a=f'_0 b$. 
Since $\add{T_\ell}\cap\add{T'_0}=0$ by Lemma \ref{no common summand}, 
we have $X\not\in\add{T'_0}$ and so $T'_0\in\add{M}$. 
Since $f$ is a left $\add{M}$-approximation, 
we obtain $c:M'\to T'_0$ with $b=cf$. 
Thus we see that $a=(f'_0 c)f$ and the assertion holds. 
\end{proof}

\subsection{A Bongartz-type Lemma for silting objects}

In this subsection we consider a Bongartz-type Lemma in a triangulated category $\TT$. 

Let us start with the following observation. 

\begin{proposition}\cite[Corollary 2.28]{AI}\label{silting indecomposable number}
All silting objects in $\TT$ have the same number of non-isomorphic indecomposable direct summands.
\end{proposition}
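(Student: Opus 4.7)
The strategy is to identify the number $|T|$ of non-isomorphic indecomposable summands of a silting object $T$ with the rank of a natural invariant of $\TT$---its Grothendieck group $K_0(\TT)$---thereby making it independent of the choice of silting object. Here I take $K_0(\TT)$ to be the free abelian group on isomorphism classes of objects of $\TT$ modulo the relations $[Y]=[X]+[Z]$ for each triangle $X\to Y\to Z\to X[1]$; in particular $[X[1]]=-[X]$.

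The heart of the plan is to prove that for any basic silting object $T=T_1\oplus\cdots\oplus T_n$ with $T_i$ indecomposable, the classes $[T_1],\ldots,[T_n]$ form a $\mathbb{Z}$-basis of $K_0(\TT)$. Generation is immediate from $\TT=\thick T$: every object of $\TT$ is built from the $T_i$ by iterated cones, shifts, and direct summands, so in $K_0(\TT)$ every class lies in the $\mathbb{Z}$-span of $\{[T_i]\}$ via the triangle and shift relations.

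For linear independence I would exploit the finite-dimensional basic $k$-algebra $B:=\End_{\TT}(T)$, whose $n$ indecomposable projective modules are $P_i:=\Hom_{\TT}(T,T_i)$. The plan is to construct a triangulated functor $\Phi:\TT\to\K^{\rm b}(\proj B)$ with $\Phi(T_i)\cong P_i$. Any such $\Phi$ induces a group homomorphism $K_0(\TT)\to K_0(\proj B)\cong\mathbb{Z}^n$ sending $[T_i]$ to $[P_i]$, and since $\{[P_i]\}$ is a $\mathbb{Z}$-basis of the target, the classes $\{[T_i]\}$ must be linearly independent in $K_0(\TT)$. The presilting condition $\Hom_{\TT}(T,T[i])=0$ for $i>0$ is precisely what ensures that the $\add T$-approximations of Proposition \ref{resolution} translate, via $\Phi$, into ordinary projective resolutions over $B$.

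The main obstacle is constructing $\Phi$ in the abstract Hom-finite Krull-Schmidt setting without assuming a dg-enhancement. In the concrete case $\TT=\K^{\rm b}(\proj A)$ this is standard derived Morita theory, but in general I would define $\Phi$ object-by-object: given $U\in\TT$, first choose $m$ with $T\geq U[m]$ (which exists because $U\in\thick T$ involves only finitely many shifts of $T$), then use the finite tower of triangles from Proposition \ref{resolution} applied to $U[m]$, together with the minimality clause, to read off a complex $\Phi(U)\in\K^{\rm b}(\proj B)$ and shift back. Verifying that $\Phi$ is well-defined, additive, and triangulated---in particular that it respects the octahedral axiom---is the technical core of the argument. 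Once the basis statement is in hand, applying it to any second silting object $U=U_1\oplus\cdots\oplus U_m$ gives $n=\operatorname{rank}K_0(\TT)=m$, completing the proof.
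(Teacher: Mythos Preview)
The paper does not give its own proof here; the result is quoted from \cite[Corollary~2.28]{AI}. Your overall strategy---showing that the indecomposable summands of any basic silting object form a $\mathbb{Z}$-basis of $K_0(\TT)$---is exactly the approach taken in that reference, so the plan is sound.

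Two remarks on the execution. First, your generation argument is too quick: building $\thick T$ involves closing under direct summands, and from $[X]+[Y]=[X\oplus Y]\in\operatorname{span}\{[T_i]\}$ one cannot conclude $[X]\in\operatorname{span}\{[T_i]\}$. The clean fix is precisely Proposition~\ref{resolution}: after shifting $U$ so that $T\ge U$ (such a shift exists because the class of objects $U$ with $\Hom_\TT(T,U[i])=0$ for $i\gg 0$ is thick and contains $T$), the tower of triangles there expresses $[U]$ directly as an alternating sum of classes of objects in $\add T$, with no summand step needed.

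Second, for linear independence you aim to construct a triangulated functor $\Phi:\TT\to\K^{\rm b}(\proj B)$. This is more than you need and, as you yourself note, is delicate without an enhancement; even in $\K^{\rm b}(\proj A)$ a silting (as opposed to tilting) object does not yield a derived equivalence with $B=\End_\TT(T)$, and making your object-by-object recipe into a genuine triangulated functor essentially requires Bondarko's weight-complex machinery. Since your goal is only a group homomorphism $K_0(\TT)\to K_0^{\rm sp}(\add T)\cong\mathbb{Z}^n$ separating the $[T_i]$, it is cleaner to define that map directly: send $[U]$ to $\sum_i(-1)^i[M_i]$ using the resolution of Proposition~\ref{resolution}, and check that this is independent of the chosen shift and respects the triangle relation. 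This is essentially what \cite{AI} does, and it sidesteps the functoriality issues entirely.
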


In general a Bongartz-type Lemma does not hold for tilting objects in $\TT$, 
but we can hope that the question below has a positive answer.

\begin{question}\label{Bongartz-type question}
Is any presilting object in $\TT$ partial silting?
\end{question}

If $A$ is a symmetric algebra and $\TT:=\K^{\rm b}(\proj A)$, 
then Question \ref{Bongartz-type question1} is nothing but Question \ref{Bongartz-type question}: 
see Example \ref{symmetric silting=tilting}.

When $A$ is an algebra presented by a quiver $Q$ with relations, 
for any vertex $i$ of $Q$ we denote by $S_i$ and $P_i$ a simple $A$-module and an indecomposable projective $A$-module 
corresponding to $i$, respectively.

\begin{example}\label{Dynkin A3}
Let $A$ be the path algebra of the quiver $1\to 2\to 3$. 
Then for any integer $n$, $T:=P_3\oplus S_1[n]$ is a pretilting object in $\K^{\rm b}(\proj A)$ 
but not a partial tilting object  (see \cite[Section 8]{Ri1}).
On the other hand, we see that $T$ is a partial silting object. 
For each choise of $n\in\mathbb{Z}$, 
$\{M_\ell \ |\ \ell\in\mathbb{Z}\}$ gives a complete lists of complements to $T$, where
\begin{enumerate}[{\rm (1)}]
\item if $n\geq0$, then 
\[M_\ell=\begin{cases}
\def\arraystretch{0.5}
\ \left(\begin{array}{c}1\\2\end{array}\right)[\ell] & \mbox{if }\ell<0 \\
\def\arraystretch{0.5}
\ \left(\begin{array}{c}1\\2\\3\end{array}\right)[\ell] & \mbox{if }0\leq\ell\leq n \\
\def\arraystretch{0.5}
\ \left(\begin{array}{c}2\\3\end{array}\right)[\ell] & \mbox{if }\ell>0
\end{cases}\]
\item if $n<0$, then 
\[M_\ell=\begin{cases}
\def\arraystretch{0.5}
\ \left(\begin{array}{c}1\\2\end{array}\right)[\ell] & \mbox{if }\ell\leq n \\
\def\arraystretch{0.5}
\ \left(\begin{array}{c}2\end{array}\right)[\ell] & \mbox{if }n<\ell<0 \\
\def\arraystretch{0.5}
\ \left(\begin{array}{c}2\\3\end{array}\right)[\ell] & \mbox{if }\ell\geq0.
\end{cases}\]
\end{enumerate}
Here, we have denoted an $A$-module by its Loewy series. 
\end{example}

The following result is natural generalization of Bongartz's result \cite{B} (cf. \cite[Lemma 3.1]{AH}), 
which plays an important role in Section \ref{torsion}. 

\begin{proposition}\label{Bongartz-type}
Let $T$ be in $\silt\TT$. 
Then any presilting object $U$ in $\TT$ satisfying $T[-1]\geq U\geq T$ is partial silting. 
\end{proposition}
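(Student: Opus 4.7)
The plan is to mimic the classical Bongartz construction: build a complement $V$ to $U$ as a universal extension of $T$ by copies of $U$, and verify that $U\oplus V$ is silting. First, I unpack the hypothesis. The relation $U\ge T$ gives $\homo{\TT}{U}{T[i]}=0$ for $i\ge 1$, while $T[-1]\ge U$ is equivalent to $\homo{\TT}{T}{U[i]}=0$ for $i\ge 2$. Together with $U$ being presilting, the only positive-degree Hom between $T$ and $U$ that can still be nonzero is $\homo{\TT}{T}{U[1]}$, which is finite-dimensional by Hom-finiteness.

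Next I would pick a $k$-basis $\phi_1,\dots,\phi_n$ of $\homo{\TT}{T}{U[1]}$, assemble them into a morphism $\phi\colon T\to U^{\oplus n}[1]$ with components $\phi_i$, and complete $\phi$ to a triangle
\[
U^{\oplus n}\longrightarrow V\longrightarrow T\xrightarrow{\ \phi\ }U^{\oplus n}[1].
\]
The claim is that $U\oplus V$ is then silting, so $U$ is partial silting. Verifying the presilting conditions is a bookkeeping exercise with the long exact sequences of $\Hom$ attached to this triangle. Applying $\homo{\TT}{U}{-}$ and using $\homo{\TT}{U}{U[i]}=0=\homo{\TT}{U}{T[i]}$ for $i\ge 1$ yields $\homo{\TT}{U}{V[i]}=0$ for $i\ge 1$. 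Applying $\homo{\TT}{-}{U[i]}$ handles $\homo{\TT}{V}{U[i]}$ for $i\ge 2$ directly from $\homo{\TT}{T}{U[i]}=0$; the crux is the case $i=1$, where the connecting map $\homo{\TT}{U^{\oplus n}}{U}\to\homo{\TT}{T}{U[1]}$ is composition with $\phi$ and is surjective by the choice of basis, forcing $\homo{\TT}{V}{U[1]}=0$. A parallel application of $\homo{\TT}{-}{T[i]}$, using that $T$ is silting and $U\ge T$, gives $\homo{\TT}{V}{T[i]}=0$ for $i\ge 1$; one final application of $\homo{\TT}{V}{-}$ then yields $\homo{\TT}{V}{V[i]}=0$ for $i\ge 1$.

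Finally, the defining triangle exhibits $T$ as an object of $\thick(U\oplus V)$, so $\TT=\thick T\subseteq\thick(U\oplus V)$ and the generation condition is met. Thus $U\oplus V$ is silting with $U$ as a direct summand. The only genuine content of the argument, beyond careful index-tracking, is the choice of $\phi$ whose components form a basis of $\homo{\TT}{T}{U[1]}$: this is precisely what forces surjectivity of the connecting homomorphism in the long exact sequence, killing the last possibly nonzero extension group $\homo{\TT}{V}{U[1]}$.
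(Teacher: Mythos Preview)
Your argument is correct and is essentially the same Bongartz-complement construction the paper uses: both form the complement $V$ from a triangle built out of an $\add U$-approximation of $T$, and then check presilting via the long exact $\Hom$-sequences, the approximation property being exactly what kills the single critical degree. The only cosmetic difference is the direction of the approximation---the paper takes a \emph{right} $\add U$-approximation $U'\to T$ (so the approximation is used to show $\homo{\TT}{U}{V[1]}=0$), whereas your basis choice amounts to a \emph{left} $\add(U[1])$-approximation $T\to U^{\oplus n}[1]$ (so it is used to show $\homo{\TT}{V}{U[1]}=0$); the resulting $V$'s differ in general, but the proofs are dual and equally valid.
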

\begin{proof}
We take a triangle $V\to U'\xrightarrow{f} T\to V[1]$ with a right $\add{U}$-approximation $f:U'\to T$ of $T$.
Put $W:=U\oplus V$. 
Since $T$ is a silting object, we can check $\TT=\thick{W}$ easily. 

(i) We show $\homo{\TT}{U}{V[i]}=0$ for any $i>0$. 
Since we have an exact sequence 
\[\homo{\TT}{U}{T[i-1]}\xrightarrow{0} \homo{\TT}{U}{V[i]}\to \homo{\TT}{U}{U'[i]}=0,\]
we observe $\homo{\TT}{U}{V[i]}=0$ for any $i>0$. 

(ii) We show $\homo{\TT}{V}{W[i]}=0$ for any $i>0$. 
Since we have an exact sequence 
\[0\stackrel{\mbox{(i)}}{=}\homo{\TT}{U'}{W[i]}\to \homo{\TT}{V}{W[i]}\to \homo{\TT}{T[-1]}{W[i]}\to \homo{\TT}{U'[-1]}{W[i]}
\stackrel{\mbox{(i)}}{=}0,\]
we find an isomorphism $\homo{\TT}{V}{W[i]}\simeq \homo{\TT}{T[-1]}{W[i]}$. 
Since there is an exact sequence 
\[\homo{\TT}{T}{T[i]}\to \homo{\TT}{T}{V[i+1]}\to \homo{\TT}{T}{U'[i+1]}\stackrel{T\geq U[1]}{=}0,\]
we obtain $\homo{\TT}{T}{V[i+1]}=0$ for any $i>0$. 
This implies $\homo{\TT}{V}{W[i]}=0$ for any $i>0$. 

By (i) and (ii), we see $\homo{\TT}{W}{W[i]}=0$ for any $i>0$. 
Thus the assertion holds.
\end{proof}

The theorem below shows that 
finiteness of the number of basic silting objects up to shift implies a positive answer for Question \ref{Bongartz-type question}.

\begin{theorem}\label{finite mutation1}
Let $T$ be in $\silt\TT$ and $U$ be a presilting object of $\TT$ with $T\geq U$. 
If there exist only finitely many $V\in\silt\TT$ satisfying $T\geq V\geq U$, 
then $U$ is a partial silting object. 
\end{theorem}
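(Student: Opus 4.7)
The plan is to run a minimality argument inside the (finite) poset of silting objects between $T$ and $U$, and then force $U$ to appear as a summand using Proposition \ref{make closer}.

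First I would set $\mathcal{V} := \{V\in\silt\TT\mid T\geq V\geq U\}$ and note that $\mathcal{V}$ is nonempty: the silting object $T$ satisfies $T\geq T$ by definition, and $T\geq U$ is given, so $T\in\mathcal{V}$. By the hypothesis of the theorem $\mathcal{V}$ is finite, and by Theorem \ref{partial order}(1) the relation $\geq$ is an honest partial order on $\silt\TT$, so I can pick an element $S\in\mathcal{V}$ that is minimal with respect to $\geq$.

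The heart of the argument is the claim $U\in\add S$, which immediately yields that $U$ is partial silting. To prove the claim I would argue by contradiction: suppose $U\not\in\add S$. Since $U$ is presilting, $S$ is silting, and $S\geq U$, Proposition \ref{make closer} applies and produces an irreducible left mutation $P$ of $S$ with $S>P\geq U$. In particular $P\in\silt\TT$, and using transitivity of $\geq$ on silting objects together with $T\geq S$ one gets $T\geq P$, so $P\in\mathcal{V}$. But $P<S$, contradicting the minimality of $S$ in $\mathcal{V}$. Hence $U\in\add S$, and $U$ is a direct summand of the silting object $S$, as desired.

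The only step doing real work is Proposition \ref{make closer}; once that is available, everything reduces to a finite descent inside $\mathcal{V}$. The main (conceptual) obstacle the statement isolates is precisely the existence of the "small" mutation $P$ lying above a prescribed presilting object, which is exactly what Proposition \ref{make closer} provides — without it one would have to build a silting completion of $U$ directly from scratch, and the finiteness hypothesis would be much harder to exploit.
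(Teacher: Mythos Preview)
Your proof is correct and is essentially the same as the paper's: both use Proposition \ref{make closer} as the sole nontrivial input and exploit the finiteness hypothesis to terminate a descent. The only cosmetic difference is that the paper phrases the argument as an infinite strictly decreasing chain $T=T_0>T_1>T_2>\cdots$ in $\silt\TT$ (contradicting finiteness of $\mathcal V$), whereas you pick a minimal element of the finite poset $\mathcal V$ directly; these are two standard, equivalent ways to package the same idea.
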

\begin{proof}
We shall find a silting object $T'$ such that $U$ is isomorphic to a direct summand of $T'$. 
Assume that $U$ is not a partial silting object. 
By Proposition \ref{make closer}, 
we have an infinite sequence  
\[T=T_{0}>T_{1}>T_2>\cdots\]
of irreducible left mutations satisfying $T_i\geq U$ for any $i\geq0$. 
It is a contradiction since there exist only finitely many $V\in\silt\TT$ satisfying $T\geq V\geq U$. 
Hence $U$ is isomorphic to a direct summand of a silting object $T':=T_i$ for some $i\geq0$. 
\end{proof}


\section{Silting transitivity}\label{silting transitivity}

In this section we will discuss silting transitivity, i.e., the transitivity of irreducible silting mutation on $\silt\TT$. 

The following definition is very useful. 

\begin{definition}\label{silting connected}
\begin{enumerate}[{\rm (1)}]
\item Let $T,U$ be basic silting objects in $\TT$. 
We say that $U$ is \emph{connected} (respectively, \emph{left-connected}) to $T$
if $U$ can be obtained from $T$ by iterated irreducible mutation (respectively, left mutation) on $\silt\TT$. 
\item A triangulated category $\TT$ is called \emph{silting-connected} 
if all basic silting objects in $\TT$ are connected to each other. 
We say that $\TT$ is \emph{strongly silting-connected} 
if for any silting objects $T,U$ of $\TT$ with $T\geq U$, $U$ is left-connected to $T$. 
When any silting object in $\TT$ is tilting, $\TT$ is sometimes called \emph{tilting-connected} 
if it is silting-connected. 
\end{enumerate}
\end{definition}

We pose the following natural question (cf. \cite{AI}).

\begin{question}\label{silting-connected question}
When is a triangulated category $\TT$ silting-connected?
\end{question}

If $A$ is a symmetric algebra and $\TT:=\K^{\rm b}(\proj A)$, 
then Question \ref{tilting-connected question} is nothing but Question \ref{silting-connected question}: 
see Example \ref{symmetric silting=tilting}.

Iyama and the author gave some answers to Question \ref{silting-connected question}.

\begin{example}\cite[Corollary 2.43, Theorem 3.1]{AI}\label{local and hereditary}
Let $A$ be a finite dimensional algebra over a field. 
Then the following hold:
\begin{enumerate}[{\rm (1)}] 
\item If $A$ is local, then $\K^{\rm b}(\proj A)$ is strongly silting-connected; 
\item If $A$ is hereditary, then $\K^{\rm b}(\proj A)$ is silting-connected. 
\end{enumerate}
\end{example}

The following example says that there is a silting-connected triangulated category $\TT$ which is not strong. 

\begin{example}\cite[Example 2.46]{AI}
Let $A$ be the path algebra of the quiver $\xymatrix{1 \ar@<0.2em>[r] \ar@<-0.2em>[r] & 2}$ 
and $\TT:=\K^{\rm b}(\proj A)$. 
Then $\TT$ is silting-connected (Example \ref{local and hereditary}).
We observe that $T:=S_1\oplus P_2[1]$ is a silting object in $\TT$ satisfying $A\geq T$. 
$T$ is left-connected to the irreducible right mutation $\mu^-_{P_1}(A)$ of $A$ with respect to $P_1$, 
and so it is connected to $A$. 
However $T$ is not left-connected to $A$. 
Indeed, the AR-quiver of $\TT$ contains the connected component:
\[\xymatrix@R0.2cm{
\cdots \ar@<0.2em>[rd]\ar@<-0.2em>[rd] &   & P_1 \ar@<0.2em>[rd]\ar@<-0.2em>[rd]  & & X_2 \ar@<0.2em>[rd]\ar@<-0.2em>[rd] & 
& \cdots \\ 
& P_2 \ar@<0.2em>[ru]\ar@<-0.2em>[ru] &     & X_1 \ar@<0.2em>[ru]\ar@<-0.2em>[ru] & & X_3 \ar@<0.2em>[ru]\ar@<-0.2em>[ru] &
}\]
We see that there exists a unique infinite sequence 
\[A=P_1\oplus P_2>P_1\oplus X_1 > X_2\oplus X_1>X_2\oplus X_3>\cdots\]
of iterated irreducible left mutations of $A$ such that each silting object is greater than $T$. 
Thus $T$ is connected to $A$ but not left-connected to $A$.
\end{example}

The theorem below plays an important role in Section \ref{symmetric}. 

\begin{theorem}\label{finite mutation2}
Let $T,U$ be basic silting objects $\TT$ with $T\geq U$. 
If there exist only finitely many $P\in\silt\TT$ satisfying $T\geq P\geq U$, 
then $U$ is left-connected to $T$. 
\end{theorem}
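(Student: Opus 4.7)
The plan is to iterate Proposition \ref{make closer} starting from $T$ and use the finiteness hypothesis together with the partial order property from Theorem \ref{partial order}(1) to force the process to terminate exactly at $U$.

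More concretely, I would construct recursively a (possibly empty) sequence
\[T = T_0 > T_1 > T_2 > \cdots\]
of basic silting objects, where each $T_{i+1}$ is an irreducible left mutation of $T_i$ and each $T_i$ satisfies $T_i \geq U$. The construction proceeds as follows: given $T_i$ with $T_i \geq U$, if $U \in \add T_i$ stop; otherwise apply Proposition \ref{make closer} to $T_i$ (which is silting) and $U$ (which is presilting, as $U$ is silting, and not in $\add T_i$ by assumption, with $T_i \geq U$) to obtain an irreducible left mutation $T_{i+1}$ of $T_i$ satisfying $T_i > T_{i+1} \geq U$. This is the only nontrivial step, and the rest of the argument is just ensuring that the process cannot continue forever.

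Now I would argue that the sequence must terminate in finitely many steps. Every $T_i$ belongs to the set $\SSS := \{P \in \silt\TT : T \geq P \geq U\}$, which is finite by hypothesis. Since $\geq$ is a genuine partial order on $\silt\TT$ by Theorem \ref{partial order}(1), a strictly decreasing chain in the finite poset $\SSS$ must be finite. Hence the construction halts at some $T_n$ with $U \in \add T_n$.

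Finally, since $T_n$ is basic silting and $U$ is a basic silting object belonging to $\add T_n$, Theorem \ref{partial order}(2)(ii) yields $U \cong T_n$. Therefore $U$ is obtained from $T$ by $n$ iterated irreducible left mutations, i.e.\ $U$ is left-connected to $T$, as required. The heart of the argument is really the application of Proposition \ref{make closer}; the finiteness hypothesis is used only as the termination criterion, and I do not anticipate any essential obstacle beyond checking that the hypotheses of Proposition \ref{make closer} are available at each step of the recursion.
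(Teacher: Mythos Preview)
Your proposal is correct and follows essentially the same approach as the paper: iterate Proposition \ref{make closer} to produce a strictly decreasing chain of irreducible left mutations above $U$, use the finiteness hypothesis to force termination at some $T_n$ with $U\in\add T_n$, and then invoke Theorem \ref{partial order}(2)(ii) to conclude $U\cong T_n$. The paper phrases this by appealing to ``the proof of Theorem \ref{finite mutation1}'' rather than spelling out the recursion again, but the content is identical.
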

\begin{proof}
By the proof of Theorem \ref{finite mutation1}, 
we observe that $U$ is a direct summand of a silting object $T'$ which is left-connected to $T$. 
Hence it follows from Theorem \ref{partial order} that $U$ is isomorphic to $T'$, 
which implies that $U$ is left-connected to $T$. 
\end{proof}

In the rest of this section, we give an application. 

\begin{definition}
We say that a triangulated category $\TT$ is \emph{silting-discrete} 
if for any silting objects $T,U$ in $\TT$ with $T\geq U$, 
there exist only finitely many $P\in\silt\TT$ satisfying $T\geq P\geq U$. 
\end{definition}

\begin{example}\label{dimension and transitivity}
If $\TT=\add\{M[i]\ |\ i\in\mathbb{Z}\}$ for some object $M$ of $\TT$, 
then $\TT$ is silting-discrete. 
For example let $A$ be a piecewise hereditary algebra of Dynkin type. 
Then putting $M$ equal to a direct sum of non-isomorphic indecomposable $A$-modules, 
we obtain $\K^{\rm b}(\proj A)=\add\{M[i]\ |\ i\in\mathbb{Z}\}$. 
Hence $\K^{\rm b}(\proj A)$ is silting-discrete. 
\end{example}

We have the following observation. 

\begin{proposition}
The following are equivalent:
\begin{enumerate}[{\rm (i)}]
\item $\TT$ is silting-discrete;
\item For any basic silting object $T$ in $\TT$ and any non-negative integer $\ell$, 
there exist only finitely many $P\in\silt\TT$ satisfying $T\geq P\geq T[\ell]$. 
\item $\TT$ has a basic silting object $M$ such that for any non-negative integer $\ell$, 
there exist only finitely many $P\in\silt\TT$ satisfying $M\geq P\geq M[\ell]$. 
\end{enumerate}
\end{proposition}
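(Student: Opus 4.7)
The plan is to prove the cyclic chain $\mathrm{(i)}\Rightarrow\mathrm{(ii)}\Rightarrow\mathrm{(iii)}\Rightarrow\mathrm{(i)}$, with the first two implications being essentially formal: for $\mathrm{(i)}\Rightarrow\mathrm{(ii)}$, given $T\in\silt\TT$ and $\ell\ge 0$, the shift $T[\ell]$ is again basic silting and satisfies $T\geq T[\ell]$ (since $\ell+i>0$ whenever $i>0$ and $T$ is presilting), so hypothesis (i) applied to the pair $T\geq T[\ell]$ yields (ii); and for $\mathrm{(ii)}\Rightarrow\mathrm{(iii)}$ one simply takes $M$ to be any basic silting object of $\TT$.

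The main content lies in $\mathrm{(iii)}\Rightarrow\mathrm{(i)}$. The key ingredient I would establish first is a \emph{sandwich property}: for every basic silting object $T\in\silt\TT$ there exist integers $a\le b$ such that $M[a]\geq T\geq M[b]$. The proof goes via the fact that $T\in\TT=\thick M$, so by induction on the construction of $\thick M$ from $\add M$, the object $T$ is a summand of an iterated extension built from only finitely many shifts $M[-N],\ldots,M[N]$. Combined with the silting vanishing $\Hom_\TT(M,M[i])=0$ for $i>0$ and the long exact sequences coming from the triangles in the construction, this forces $\Hom_\TT(M,T[j])=0$ and $\Hom_\TT(T,M[j])=0$ for all $j>N$, which translates into $M[-N]\geq T\geq M[N]$.

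Given basic silting $T\geq U$, I would then apply the sandwich lemma to both and take a common exterior to find integers $a\le b$ with $M[a]\geq T\geq U\geq M[b]$. For any $P\in\silt\TT$ satisfying $T\geq P\geq U$, transitivity of the partial order on $\silt\TT$ (Theorem~\ref{partial order}(1)) gives $M[a]\geq P\geq M[b]$, and shifting by $-a$ this becomes $M\geq P[-a]\geq M[\ell]$ where $\ell=b-a\ge0$. By hypothesis (iii) there are only finitely many such $P[-a]$, hence only finitely many such $P$, which is $\mathrm{(i)}$. The main obstacle is the sandwich property; once it is in hand, the rest of the argument is essentially transitivity together with a single shift.
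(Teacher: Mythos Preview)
Your proposal is correct and follows essentially the same route as the paper. The paper also dispatches $\mathrm{(i)}\Rightarrow\mathrm{(ii)}\Rightarrow\mathrm{(iii)}$ as obvious, and for $\mathrm{(iii)}\Rightarrow\mathrm{(i)}$ uses exactly your sandwich idea: it shifts so that $M\geq T$ and then observes that some $M[n]$ satisfies $U\geq M[n]$, obtaining $M\geq T\geq U\geq M[n]$ and concluding by the finiteness hypothesis. The only difference is cosmetic: the paper simply asserts the existence of such shifts (implicitly relying on $\TT=\thick M$ as in \cite{AI}), whereas you sketch the thick-subcategory induction explicitly; your added detail is correct and harmless.
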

\begin{proof}
The implications (i)$\Rightarrow$(ii)$\Rightarrow$(iii) are obvious. 
We show the implication (iii)$\Rightarrow$(i). 
Let $T,U$ be basic silting objects in $\TT$ satisfying $T\geq U$. 
Since $M$ is a silting object, applying shifts, we can assume $M\geq T$. 
Since $U$ is a silting object, there is an integer $n$ such that $U\geq M[n]$. 
Thus we have $M\geq T\geq U\geq M[n]$. 
Hence the assertion holds 
since there exist only finitely many $P\in\silt\TT$ satisfying $M\geq P\geq M[n]$. 
\end{proof}

The corollary below is an immediate consequence of Theorem \ref{finite mutation2}. 

\begin{corollary}\label{finite transitivity}
If $\TT$ is silting-discrete, then it is strongly silting-connected. 
\end{corollary}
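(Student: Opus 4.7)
The plan is to observe that this corollary is essentially an immediate translation of the hypothesis through Theorem \ref{finite mutation2}. I would start by unpacking both definitions side by side: silting-discreteness gives finiteness of the interval $[U,T] := \{P\in\silt\TT \mid T\geq P\geq U\}$ for every pair $T\geq U$ in $\silt\TT$, whereas strong silting-connectedness requires that every such $U$ be reachable from $T$ by iterated irreducible \emph{left} mutation.

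The bridge between the two is precisely Theorem \ref{finite mutation2}, which asserts that finiteness of $[U,T]$ is already enough to guarantee left-connectedness of $U$ to $T$. So the main step is just to fix basic silting objects $T,U$ with $T\geq U$, invoke silting-discreteness to see that $[U,T]$ is finite, and then apply Theorem \ref{finite mutation2} to conclude that $U$ is left-connected to $T$. Since $T,U$ were arbitrary, this exactly matches the definition of strong silting-connectedness.

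There is no real obstacle here; the corollary is a formal consequence and the substantive work has already been done in Theorem \ref{finite mutation2} (which in turn rests on Proposition \ref{make closer} producing an infinite descending chain of irreducible left mutations above $U$ whenever $U$ is not a summand of some mutation of $T$). In writing the proof I would therefore keep it to a couple of sentences rather than rederive the content of Theorem \ref{finite mutation2}.
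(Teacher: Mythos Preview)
Your proposal is correct and matches the paper's own treatment exactly: the paper simply states that the corollary is an immediate consequence of Theorem \ref{finite mutation2}, which is precisely the argument you give. There is nothing to add.
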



\section{Silting objects induced by torsion classes}\label{torsion}

In this section we show that any covariantly finite torsion class gives rise to a silting object.
The last result of this section plays an important role in proving our main result. 

Throughout this section let $A$ be a finite dimensional algebra over a field and $\TT:=\K^{\rm b}(\proj{A})$. 

For any full subcategory $\EEE$ of $\mod{A}$, we define full subcategories of $\mod{A}$ by 
\[{^{\perp}}{\EEE}=\{M\in \mod{A}\ |\ \homo{A}{M}{E}=0\ \mbox{for any } E\in \EEE \};\]
\[{\EEE}^{\perp}=\{M\in \mod{A}\ |\ \homo{A}{E}{M}=0\ \mbox{for any } E\in \EEE \}.\]
For any $A$-module $X$, we simply write ${}^\perp X$ (respectively, $X^\perp$) 
instead of ${}^{\perp}(\add{X})$ (respectively, $(\add{X})^\perp$). 

We recall the definition of a torsion class. 

\begin{definitiontheorem}\cite{ASS}
Let $\CCC$ be a full subcategory of $\mod A$. 
We say that $\CCC$ is a \emph{torsion class} in $\mod A$
if $\CCC$ is closed under extensions and taking factor modules. 

Let $\CCC$ be a torsion class in $\mod A$. 
For any $A$-module $X$, 
there exists an exact sequence $0\to X'\to X\to X''\to0$ with $X'\in\CCC$ and $X''\in\CCC^\perp$. 
We call $X'$ a \emph{torsion part} of $X$ and denote it by $\torsion(X)$. 
Moreover the exact sequence above implies that there is an equality $\CCC={}^\perp(\CCC^\perp)$. 
%
%
\end{definitiontheorem}

There is a fundamental example of a torsion class in $\mod{A}$. 

\begin{example}\label{lemma:torsion_ex}\cite[Example VI.1.2(a)]{ASS}
For any $A$-module $X$, the full subcategory ${}^\perp X$ of $\mod A$ is a torsion class. 
\end{example}

\begin{definition}
Let $\EEE$ be a full subcategory of $\mod{A}$ which is closed under extensions and $X$ be an $A$-module in $\EEE$. 
We say that $X$ is \emph{Ext-projective} (respectively, \emph{Ext-injective}) in $\EEE$ 
if $\ext{A}{1}{X}{E}=0$ (respectively, $\ext{A}{1}{E}{X}=0$) for any $A$-module $E$ in $\EEE$.   
\end{definition}

We denote by $\tau$ the Auslander-Reiten translation of $A$.

The following result gives a characterization of Ext-projective and Ext-injective modules. 

\begin{lemma}\label{lemma:torsion_Ext} \cite[Lemma 2.6, 2.12]{AH}
Let $\CCC$ be a torsion class in $\mod A$ and $X$ be in $\CCC$. 
\begin{enumerate}[{\rm (i)}]
\item $X$ is Ext-injective in $\CCC$ if and only if it is isomorphic to $\torsion(I)$ for some $I\in\add DA$; 
\item $X$ is Ext-projective in $\CCC$ if and only if $\tau X$ belongs to $\CCC^\perp$.
\end{enumerate}
\end{lemma}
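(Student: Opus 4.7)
The plan is to handle (i) and (ii) by independent arguments: (i) via a short-exact-sequence manipulation centered on the injective envelope of $X$, and (ii) via the Auslander-Reiten formula combined with the fact that $\tau$ converts modules without projective summands into modules without injective summands.

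For (i), I would first handle the ``if'' direction by applying $\Hom(E,-)$, $E \in \CCC$, to the canonical sequence $0 \to \torsion(I) \to I \to I/\torsion(I) \to 0$; since $I/\torsion(I) \in \CCC^\perp$ and $I$ is injective, the long exact sequence forces $\Ext^1(E, \torsion(I)) = 0$. For the converse, I would take $I := I(X) \in \add DA$, the injective envelope, and set $X' := \torsion(I)$, so $X \subseteq X'$. Because $X'/X \in \CCC$, Ext-injectivity of $X$ splits the sequence $0 \to X \to X' \to X'/X \to 0$; and any direct complement of $X$ inside $X' \subseteq I$ meets $X$ trivially, contradicting essentiality unless the complement vanishes. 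Thus $X = \torsion(I)$.

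For (ii), the key input is the Auslander-Reiten formula $\Ext^1_A(X, E) \simeq D\overline{\Hom}_A(E, \tau X)$, where $\overline{\Hom}$ denotes morphisms modulo those factoring through an injective. The ``if'' direction is immediate, since $\tau X \in \CCC^\perp$ forces $\overline{\Hom}(E, \tau X) = 0$ a fortiori. For the converse, I would first reduce to the case where $X$ has no projective summand (harmless: such summands lie in $\CCC$, preserve Ext-projectivity upon removal, and contribute nothing to $\tau X$), so that $\tau X$ has no injective summand. Then, setting $Y := \torsion(\tau X) \in \CCC$, the AR formula yields that the inclusion $\iota \colon Y \hookrightarrow \tau X$ factors through an injective; replacing this injective by the direct summand $I(Y)$ gives a factorization $Y \hookrightarrow I(Y) \xrightarrow{h} \tau X$. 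Essentiality of $Y$ in $I(Y)$ combined with injectivity of $\iota = h|_Y$ forces $h$ itself to be mono, which then splits since $I(Y)$ is injective, exhibiting $I(Y)$ as an injective summand of $\tau X$. By the no-injective-summand property, $I(Y) = 0$ and hence $Y = 0$, proving $\tau X \in \CCC^\perp$.

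I expect the main obstacle to be in this last step of (ii): producing the split monomorphism $I(Y) \hookrightarrow \tau X$ from the $\overline{\Hom}$-factorization requires careful use of the essentiality of the injective envelope, and the conclusion then hinges on the standard but non-trivial Auslander-Reiten fact that $\tau$ induces an equivalence between the stable and costable module categories. All other steps reduce to routine manipulations of short exact sequences.
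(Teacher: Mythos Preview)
The paper does not supply its own proof of this lemma; it is quoted from \cite[Lemma 2.6, 2.12]{AH} without argument, so there is nothing in the paper to compare against. Your proof is correct and is essentially the standard one for these classical facts (going back to Auslander--Smal\o\ and Hoshino).

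A couple of minor points worth tightening in your write-up. In (i), the inclusion $X\subseteq\torsion(I)$ deserves one line of justification: the image of $X$ in $I/\torsion(I)\in\CCC^\perp$ lies in $\CCC$ (as a quotient of $X$), hence is zero. In (ii), your phrase ``replacing this injective by the direct summand $I(Y)$'' hides the actual mechanism: from a factorization $\iota = g\circ f$ with $f\colon Y\to J$ necessarily injective (since $\iota$ is), extend $f$ along the envelope $Y\hookrightarrow I(Y)$ using injectivity of $J$, and set $h$ to be the composite $I(Y)\to J\xrightarrow{g}\tau X$. Essentiality then forces $h$ mono exactly as you say. Finally, the reduction ``$X$ has no projective summand $\Rightarrow$ $\tau X$ has no injective summand'' is most cleanly justified by decomposing $X$ into indecomposables and using that $\tau$ carries indecomposable non-projectives to indecomposable non-injectives; invoking the full stable/costable equivalence is heavier than needed.
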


For a full subcategory $\EEE$ of $\mod{A}$, we define the annihilator of $\EEE$ to be the ideal 
\[\ann{\EEE}=\{a\in A\ |\ Ea=0\ \mbox{for any } E\in \EEE \}.\]

Let $\CCC$ be a torsion class in $\mod{A}$ and let $B$ be the factor algebra $A/\ann{\CCC}$ of $A$ by $\ann{\CCC}$. 
Then it is easy to see that any $A$-module in $\CCC$ can be naturally regarded as a right $B$-module, 
and $\torsion(DA)$ is isomorphic to $DB$ as a right $B$-module (cf. \cite{S}). 

It is well-known that a tilting module induces a torsion class. 
Moreover such a torsion class is characterized by covariantly finiteness.  

\begin{lemma}\label{lemma:tilt_torsion} \cite{S}
Let $\CCC$ be a torsion class in $\mod A$ 
and $X$ be a direct sum of non-isomorphic indecomposable Ext-projective modules in $\CCC$. 
Then the following are equivalent:
\begin{enumerate}[$(1)$]
\item $X$ is a tilting module as a right $A/\ann{\CCC}$-module; 
\item $\CCC$ is covariantly finite in $\mod A$.
\end{enumerate}
\end{lemma}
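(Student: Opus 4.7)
The plan is to follow Smalø's classical argument \cite{S}, organized around a reduction to the faithful case followed by a direct construction of the tilting data. First I would reduce to $\ann{\CCC}=0$: modules in $\CCC$ are naturally $B$-modules for $B=A/\ann{\CCC}$, and extension-closure of $\CCC$ yields $\ext{A}{1}{M}{N}\simeq\ext{B}{1}{M}{N}$ for $M,N\in\CCC$ (any extension of $N$ by $M$ in $\mod A$ already lies in $\CCC\subseteq\mod B$), so Ext-projectivity is unchanged; similarly, any left $\CCC$-approximation in $\mod A$ of $M\in\mod A$ kills $M\cdot\ann{\CCC}$ and so factors through $M/M\cdot\ann{\CCC}\in\mod B$, whence covariant finiteness is also unchanged. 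Hence I may assume $\ann{\CCC}=0$.

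For (1)$\Rightarrow$(2), I would identify $\CCC$ with $\mathrm{Fac}(X)$. Quotient-closure of $\CCC$ together with $X\in\CCC$ gives $\mathrm{Fac}(X)\subseteq\CCC$. Conversely, Brenner--Butler tilting theory characterizes $\mathrm{Fac}(X)=\{M:\ext{A}{1}{X}{M}=0\}$, and Ext-projectivity of $X$ in $\CCC$ forces $\CCC\subseteq\mathrm{Fac}(X)$. Covariant finiteness of $\mathrm{Fac}(X)$ follows from the standard observation that $\add X$ is functorially finite by Hom-finiteness, so that a left $\add X$-approximation of any $M$ extends to a left $\mathrm{Fac}(X)$-approximation via the tilting coresolution $0\to A\to X_0\to X_1\to 0$.

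For (2)$\Rightarrow$(1), covariant finiteness yields a minimal left $\CCC$-approximation $f\colon A\to X_0$. I would argue $f$ is injective: for $a\in\ker f$ and $m\in M\in\CCC$, the homomorphism $A\to M$ sending $1\mapsto m$ factors through $f$, so $m\cdot a=0$, forcing $a\in\ann{\CCC}=0$. Setting $X_1=\mathrm{coker}(f)\in\CCC$ (by quotient-closure), I would apply $\homo{A}{-}{M}$ for $M\in\CCC$ to $0\to A\to X_0\to X_1\to 0$; the approximation property yields the surjection $\homo{A}{X_0}{M}\twoheadrightarrow\homo{A}{A}{M}$, so $\ext{A}{1}{X_1}{M}=0$. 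A similar argument on summands shows $X_0$ is Ext-projective, hence $X_0,X_1\in\add X$; this gives condition (iii) of the tilting definition, and (i) $\ext{A}{1}{X}{X}=0$ is immediate from Ext-projectivity.

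The main obstacle is condition (ii), $\mathrm{pd}_A(X)\le 1$. Here I would exploit the dual picture afforded by Lemma \ref{lemma:torsion_Ext}(i): in the faithful case $\torsion(DA)=DA$, so $DA\in\CCC$ and every Ext-injective of $\CCC$ is a summand of $DA$. Dualizing the approximation construction should produce, for each module in $\CCC$, a two-term presentation controlled by the Ext-injectives, and combining this with a dimension shift along the torsion decomposition $0\to tN\to N\to N/tN\to 0$ of an arbitrary $N\in\mod A$ would force $\ext{A}{2}{X}{N}=0$. Making this dimension-counting step precise---reconciling the Ext-projective coresolution on one side with the Ext-injective structure on the other---is where Smalø's original argument is most delicate and is the portion of the proof I expect to require the most work.
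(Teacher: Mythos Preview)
The paper does not prove this lemma---it is quoted from Smal\o\ \cite{S}---so there is no in-paper argument to compare against. Your outline reconstructs the standard proof and is essentially correct for the reduction step, for $(1)\Rightarrow(2)$, and for conditions (i) and (iii) in $(2)\Rightarrow(1)$.

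For condition (ii) you are making things harder than necessary. You correctly note that in the faithful case $DA\in\CCC$; this is exactly the content of the identity $\torsion(DA)\simeq DB$ recorded just before the lemma (with $B=A$). Once $DA\in\CCC$, the argument is a one-line dimension shift: for any $N\in\mod A$, the injective envelope $E(N)$ lies in $\add DA\subseteq\CCC$, hence the cosyzygy $E(N)/N$ lies in $\CCC$ as well (torsion classes are closed under quotients); applying $\Hom_A(X,-)$ to $0\to N\to E(N)\to E(N)/N\to 0$ gives
\[
\Ext^2_A(X,N)\;\simeq\;\Ext^1_A\bigl(X,\,E(N)/N\bigr)\;=\;0
\]
by Ext-projectivity of $X$. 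No dual approximation construction and no torsion decomposition of $N$ is needed. So the portion you flag as ``most delicate'' is in fact the short step; if anything deserves care it is the unproved assertion $DA\in\CCC$ (equivalently $\torsion(DA)=DB$), which both you and the paper pass over as routine. That is the place to invest the effort, not the dimension shift.
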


\begin{remark}\label{remark:contrav_finite_finite}
For any $A$-module $M$, $\add{M}$ is covariantly finite in $\mod{A}$. 
Hence if $A$ is representation-finite, 
then the condition (2) in Lemma \ref{lemma:tilt_torsion} automatically holds. 
\end{remark}

Now we construct a complex in $\K^{\rm b}(\proj{A})$ induced by a torsion class (cf. \cite{AH}). 

\begin{definition}\label{asilting}
Let $\CCC$ be a torsion class in $\mod A$. 
Let $X$ be a direct sum of non-isomorphic indecomposable Ext-projective modules in $\CCC$, 
and $V$ a direct sum of non-isomorphic indecomposable injective modules in $\CCC^\perp$. 
We define a complex $T$ in $\K^{\rm{b}}(\proj A)$ as 
\[T:=T_\CCC:=
\begin{cases}
\begin{array}{c}
\xymatrix@R=0.5pt{
{\rm (0th)} & {\rm (1st)} \\
P_1 \ar[r]^{p} & P_0 \\
\oplus & \\
\nu^{-1}V &
}
\end{array}
\end{cases}\]
where $P_1 \xrightarrow{p}P_0$ is a projective presentation of $X$.
%
\end{definition}

\begin{example}\label{Okuyama-Rickard}
Let $e$ be an idempotent of $A$ and $I:=\nu(1-e)A$.
We see that $T_{{}^\perp I}$ is isomorphic to the complex:
\[T:=
\begin{cases}
\begin{array}{c}
\xymatrix@R=0.5pt{
{\rm (0th)} & {\rm (1st)} \\
P(eA(1-e)A) \ar[r]^(0.7){p_e} & eA \\
\oplus & \\
(1-e)A &
}
\end{array}
\end{cases}
\]
where $p_e$ gives a projective cover of the submodule $eA(1-e)A$ of $eA$. 
Actually, we observe that any injective $A$-module in $({}^\perp I)^\perp$ belongs to $\add I$ 
and see that $M:=eA/eA(1-e)A$ is an Ext-projective module in ${}^\perp I$. 
We shall show that any Ext-projective module in ${}^\perp I$ belongs to $\add M$. 
Note that all composition factors of any $A$-module in ${}^\perp I$ belong to $\add(\operatorname{top}eA)$. 
Let $X$ be in ${}^\perp I$. 
We take a minimal right $\add M$-approximation $f:M'\to X$. 
Since $M$ is an Ext-projective module in ${}^\perp I$, 
we obtain $\homo{A}{M}{\operatorname{coker}f}=0$.
This implies $\operatorname{coker}f=0$, so $f$ is an epimorphism. 
Thus we have an exact sequence $0\to Y\to M'\xrightarrow{f} X\to0$ with $Y\in {}^\perp I$ and $M'\in\add M$. 
In particular if $X$ is an Ext-projective module in ${}^\perp I$, then it belongs to $\add M$. 
\end{example}

As the first step toward the main result of this section, we show the lemma below.  

\begin{lemma}\label{vanishing}
Let $\CCC$ be a torsion class in $\mod A$. 
Then $T_{\CCC}$ is a partial silting object in $\TT$. 
\end{lemma}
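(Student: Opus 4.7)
The plan is to invoke Proposition~\ref{Bongartz-type} with the silting object chosen to be $A$: it will then suffice to check that $T_\CCC$ is presilting and satisfies $A[-1]\geq T_\CCC\geq A$. Since $T_\CCC$ has length two with cohomological support in degrees $0$ and $1$, the relation $A[-1]\geq T_\CCC$ reduces to the automatic vanishing $H^i(T_\CCC)=0$ for $i\geq 2$. Dually, Lemma~\ref{Serre} yields a natural isomorphism $\Hom_{\TT}(T_\CCC,A[i])\simeq D H^{-i}(\nu T_\CCC)$, and since $\nu T_\CCC$ is again supported in degrees $0$ and $1$, this vanishes for every $i>0$, giving $T_\CCC\geq A$.

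The core of the proof is therefore the presilting condition $\Hom_{\TT}(T_\CCC,T_\CCC[i])=0$ for $i>0$. For $i\geq 2$ the vanishing once more follows from the length of $T_\CCC$, and the only remaining case is $i=1$. Unwinding the $\Hom$-complex, a class in $\Hom_{\TT}(T_\CCC,T_\CCC[1])$ is represented by a morphism
\[f=(f_1,f_2)\colon P_1\oplus\nu^{-1}V\to P_0,\]
and vanishes precisely when $f$ can be written as $d\alpha-\beta d$ for some $\alpha\in\End(P_1\oplus\nu^{-1}V)$, $\beta\in\End(P_0)$, with $d=(p,0)$.

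I construct such $\alpha,\beta$ componentwise. For the $\nu^{-1}V$-component $f_2$, Lemma~\ref{Serre} provides $\Hom_A(\nu^{-1}V,X)\simeq D\Hom_A(X,V)$, which vanishes because $X\in\CCC$ and $V\in\CCC^\perp$; hence $\pi\circ f_2=0$ for the canonical surjection $\pi\colon P_0\to X$, so $f_2$ factors through $\ker\pi=\operatorname{Im}(p)$, and projectivity of $\nu^{-1}V$ lifts it through $p$, yielding the required component of $\alpha$. For the $P_1$-component $f_1$, Ext-projectivity of $X$ in $\CCC$ gives $\Ext^1_A(X,X)=0$, so applying $\Hom_A(-,X)$ to the presentation $P_1\xrightarrow{p}P_0\xrightarrow{\pi}X\to 0$ shows that $p^*\colon\Hom_A(P_0,X)\to\Hom_A(P_1,X)$ is surjective. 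Some $\psi\colon P_0\to X$ therefore satisfies $\psi p=-\pi f_1$; lifting $\psi$ through $\pi$ by projectivity of $P_0$ produces $\beta\colon P_0\to P_0$ with $\pi\beta=\psi$, and then $\pi(f_1+\beta p)=0$ lets projectivity of $P_1$ lift $f_1+\beta p$ through $p$, finishing the expression $f=d\alpha-\beta d$. The principal obstacle is exactly this case $i=1$: the other checks reduce to cohomological support, but killing $\Hom_{\TT}(T_\CCC,T_\CCC[1])$ genuinely combines both distinguishing features of the construction---the Ext-projectivity of $X$ (which feeds $\Ext^1(X,X)=0$) and the orthogonality $\Hom_A(X,V)=0$ carried across by the Nakayama functor.
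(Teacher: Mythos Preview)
Your overall strategy---verify $A[-1]\geq T_\CCC\geq A$ and the presilting condition, then invoke Proposition~\ref{Bongartz-type}---matches the paper's, and the checks for $A[-1]\geq T_\CCC\geq A$, for $\Hom_\TT(T_\CCC,T_\CCC[i])=0$ with $i\geq 2$, and for the $f_2$-component are all fine. The problem is the $f_1$-component.

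You claim that $\Ext^1_A(X,X)=0$ forces $p^*\colon\Hom_A(P_0,X)\to\Hom_A(P_1,X)$ to be surjective. This is only true when $p$ is injective, i.e.\ when $\operatorname{pd}_A X\leq 1$; in the situations the paper cares about (e.g.\ symmetric algebras) that almost never holds. In general, extending $P_1\xrightarrow{p}P_0$ to a projective resolution with $d_2\colon P_2\to P_1$, the condition $\Ext^1_A(X,X)=0$ gives only $\operatorname{Im}(p^*)=\ker(d_2^*)$, which need not be all of $\Hom_A(P_1,X)$. In fact a short computation (which your own homotopy analysis essentially carries out) shows $\operatorname{coker}(p^*)\cong\Hom_{\K^{\rm b}(\proj A)}(C,C[1])$ for the two-term complex $C=(P_1\xrightarrow{p}P_0)$, and by Lemma~\ref{Serre} this is $D\Hom_A(X,\tau X)$. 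The Auslander--Reiten formula gives $\Ext^1_A(X,X)\cong D\overline{\Hom}_A(X,\tau X)$, so your hypothesis only kills maps $X\to\tau X$ modulo those factoring through injectives; it does not kill $\Hom_A(X,\tau X)$ itself, which is exactly what surjectivity of $p^*$ requires.

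The paper closes this gap by using the full strength of Ext-projectivity via Lemma~\ref{lemma:torsion_Ext}(ii): since $X$ is Ext-projective in $\CCC$ one has $\tau X\in\CCC^\perp$, hence $\Hom_A(X,\tau X)=0$. Combined with $V\in\CCC^\perp$, this gives $H^0(\nu T_\CCC)\simeq\tau X\oplus V\in\CCC^\perp$, and then Lemma~\ref{Serre} yields
\[
\Hom_\TT(T_\CCC,T_\CCC[1])\simeq D\Hom_A\bigl(H^1(T_\CCC),H^0(\nu T_\CCC)\bigr)=D\Hom_A(X,\tau X\oplus V)=0.
\]
If you want to keep your explicit homotopy argument, the minimal repair is to replace ``$\Ext^1_A(X,X)=0$'' by ``$\Hom_A(X,\tau X)=0$'' (justified via Lemma~\ref{lemma:torsion_Ext}(ii)); that does give $\operatorname{coker}(p^*)=0$ and the rest of your construction of $\alpha,\beta$ then goes through.
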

\begin{proof}
We use the notation of Definition \ref{asilting}. 
By Lemma \ref{lemma:torsion_Ext}, we have that $H^0(\nu T)\simeq \tau X\oplus V$ belongs to $\CCC^\perp$. 
Hence by Lemma \ref{Serre} we obtain isomorphisms
\[\def\arraystretch{1.5}
\begin{array}{rl}
\homo{\TT}{T}{T[1]} &\simeq D\homo{\K^{\rm b}(\mod{A})}{T[1]}{\nu T} \\
                                      &\simeq D\homo{A}{H^1(T)}{H^0(\nu T)} \\
                                      &=0,
\end{array}\] 
which implies $\homo{\K^{\rm b}(\proj A)}{T}{T[i]}=0$ for any $i>0$. 
Since $A[-1]\geq T\geq A$, 
it follows from Proposition \ref{Bongartz-type} that $T$ is a partial silting object. 
\end{proof}

We now state the main result of this section. 

\begin{theorem}\label{silting induced by torsion pair}
Let $\CCC$ be a torsion class in $\mod A$. 
If $\CCC$ is covariantly finite in $\mod A$, 
then $T:=T_\CCC$ is a silting object in $\TT$. 
Moreover if $\nu\CCC$ is contained in $\CCC$, then $T$ is a tilting object. 
\end{theorem}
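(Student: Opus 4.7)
The plan is to upgrade Lemma \ref{vanishing} (which already shows $T$ is partial silting) to a silting object via a summand-count argument, and then to verify the tilting condition under the extra hypothesis $\nu\CCC\subseteq\CCC$.

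First I would note that by Lemma \ref{vanishing}, $T$ is a direct summand of some silting object $T^\star\in\silt\TT$. By Proposition \ref{silting indecomposable number}, any such $T^\star$ has exactly $n$ non-isomorphic indecomposable summands, where $n$ is the number of simple $A$-modules. So it suffices to prove $|T|=n$, where $|T|$ denotes the number of non-isomorphic indecomposable summands of $T$ in $\TT$. Directly from the construction one obtains $|T|=|X|+|V|$: taking a minimal projective presentation, the $2$-term complex $(P_1\to P_0)$ decomposes in $\K^{\rm b}(\proj A)$ into one indecomposable summand for each indecomposable summand of $X$ (either a genuine $2$-term complex with nonzero differential, or a projective stalk in degree $1$), while $\nu^{-1}V$ contributes $|V|$ projective stalks in degree $0$, and these two families share no indecomposable summand.

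Next I would establish the identity $|X|+|V|=n$. By Lemma \ref{lemma:tilt_torsion} combined with the covariant finiteness hypothesis, $X$ is a tilting module over $B:=A/\ann\CCC$, so $|X|$ equals the number of simple $B$-modules. To compute $|V|$, I would use the elementary identity $\dim_k \Hom_A(E,I(S))=[E:S]$ (composition multiplicity), which shows that $I(S)\in\CCC^\perp$ if and only if $S$ is not a composition factor of any object of $\CCC$. Since every $E\in\CCC$ satisfies $E\cdot\ann\CCC=0$ and is thus a $B$-module, all its composition factors are simple $B$-modules; conversely, because $X\in\CCC$ is a tilting $B$-module, every simple $B$-module appears as a composition factor of $X$. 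This yields a bijection between indecomposable injective modules in $\CCC^\perp$ and simple $A$-modules which are not simple $B$-modules. Hence $|X|+|V|=n$ and $T$ is silting.

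For the tilting assertion, assume $\nu\CCC\subseteq\CCC$. Since $T$ is silting it remains to show $\Hom_\TT(T,T[-i])=0$ for $i>0$; by Lemma \ref{Serre} this is equivalent to $\Hom_{\K^{\rm b}(\mod A)}(T,\nu T[i])=0$ for $i>0$. I would decompose $T=(P_1\to P_0)\oplus \nu^{-1}V$ and $\nu T=(\nu P_1\to \nu P_0)\oplus V$ and examine the four Hom-pieces. The nontrivial computation reduces to $\Ext^1_A(X,\nu X)=0$, which holds because $X$ is Ext-projective in $\CCC$ and $\nu X\in\CCC$ by hypothesis; the remaining pieces vanish using that $\nu^{-1}V$ is projective, $V\in\CCC^\perp$, and the orthogonality between $\CCC$ and $\CCC^\perp$. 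The hard part is the composition-factor bijection in the second paragraph, where covariant finiteness enters crucially via Lemma \ref{lemma:tilt_torsion} and where the $B$-module structure of objects in $\CCC$ must be used carefully; the tilting computation, once set up through Serre duality, is essentially formal given the Ext-projective property of $X$.
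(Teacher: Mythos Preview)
Your counting argument for the silting claim is correct and matches the paper's proof: both use Lemma~\ref{vanishing} to get partial silting, then count indecomposable summands via Lemma~\ref{lemma:tilt_torsion} and Proposition~\ref{silting indecomposable number}. Your composition-factor bijection is a slight rephrasing of the paper's identity $|X|=|A/\ann\CCC|=|\torsion(DA)|=|DA/V|$, but the content is the same.

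Your tilting argument, however, has a genuine gap. The piece you call ``nontrivial'', namely $\Hom_{\K^{\rm b}(\mod A)}\bigl((P_1\to P_0),(\nu P_1\to\nu P_0)[1]\bigr)$, is \emph{not} equal to $\Ext^1_A(X,\nu X)$. A direct computation shows it equals
\[
\operatorname{coker}\bigl(p^*:\Hom_A(P_0,\nu X)\to\Hom_A(P_1,\nu X)\bigr),
\]
which surjects onto $\Ext^1_A(X,\nu X)$ but has an additional contribution coming from $K:=\ker p$; indeed there is an exact sequence $0\to\Ext^1_A(X,\nu X)\to(\text{this piece})\to Q\to 0$ with $Q$ a submodule of $\Hom_A(K,\nu X)$. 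Since the minimal projective presentation of $X$ over $A$ need not be injective (only $\operatorname{pd}_B X\le 1$ holds, not $\operatorname{pd}_A X\le 1$), one cannot ignore $K$. So showing $\Ext^1_A(X,\nu X)=0$ alone is insufficient.

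The paper avoids Serre duality entirely for this step. It computes directly
\[
\Hom_{\TT}(T,T[-1])\simeq\Hom_A\bigl(H^1(T),H^0(T)\bigr)=\Hom_A\bigl(X,\ \nu^{-1}\tau X\oplus\nu^{-1}V\bigr),
\]
using that $K\simeq\nu^{-1}\tau X$, and then observes that $\nu\CCC\subseteq\CCC$ implies $\nu^{-1}(\CCC^\perp)\subseteq\CCC^\perp$ (via the adjunction $\nu\dashv\nu^{-1}$), so $H^0(T)\in\CCC^\perp$ and the Hom vanishes. The crucial point you are missing is precisely this: the vanishing requires $\nu^{-1}\tau X\in\CCC^\perp$, not merely the Ext-projectivity of $X$ against $\nu X$. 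Your framework can be repaired by computing the offending piece correctly, but the paper's direct route is both shorter and makes the role of the hypothesis $\nu\CCC\subseteq\CCC$ transparent.
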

\begin{proof}
Let $X$ and $V$ be as in Definition \ref{asilting}. 
For an object $M$ of $\mod A$ or $\TT$, 
we denote by $|M|$ the number of non-isomorphic indecomposable direct summands of $M$. 
Since $\CCC$ is covariantly finite in $\mod{A}$, $X$ is a tilting $A/\ann{\CCC}$-module by Lemma \ref{lemma:tilt_torsion}. 
Hence we obtain equalities 
$|X|=|A/\ann\CCC|=|\torsion(DA)|=|DA/V|$, 
which imply $|T|=|X|+|V|=|DA|=|A|$. 
Since $T$ is a partial silting object by Lemma \ref{vanishing},
it follows from Proposition \ref{silting indecomposable number} that $T$ has to be a silting object. 

Assume $\CCC\supseteq\nu\CCC$. 
Then we also have $\CCC^\perp\supseteq\nu^{-1}(\CCC^\perp)$. 
Since $\tau X$ is in $\CCC^\perp$ by Lemma \ref{lemma:torsion_Ext}, 
we observe that $H^0(T)\simeq\nu^{-1}\tau X\oplus \nu^{-1}V$ belongs to $\CCC^\perp$. 
Hence we get an isomorphism 
\[\homo{\TT}{T}{T[-1]}\simeq \homo{A}{H^1(T)}{H^0(T)}=0.\]
Thus the last assertion holds.
\end{proof}


\section{Tilting-connectedness for representation-finite symmetric algebras}\label{symmetric}

In this section we consider silting-connectedness of $\K^{\rm b}(\proj A)$. 
In particular, we prove the main result of this paper (Theorem \ref{symm finite case}).
The assumption that $A$ is not only self-injective, but is symmetric, finally plays a role. 

Throughout this section let $A$ be a finite dimensional algebra over an algebraically closed field and $\TT:=\K^{\rm b}(\proj{A})$. 

The proposition below has the key to prove our main result. 

\begin{proposition}\label{A}
$\TT$ is silting-connected 
if, for any algebra $\Lambda$ which is derived equivalent to $A$, 
the following conditions are satisfied:
\begin{enumerate}
\item[{\rm (A1)}] Let $T$ be a basic silting object in $\K^{\rm b}(\proj\Lambda)$ with $\Lambda[-1]\geq T\geq \Lambda$. 
Then $T$ is connected to both $\Lambda$ and $\Lambda[-1]$; 
\item[{\rm (A2)}] Let $P$ be a basic tilting object in $\K^{\rm b}(\proj\Lambda)$ with $\Lambda[-\ell]\geq P\geq \Lambda$ 
for a positive integer $\ell$. 
Then there exists a basic tilting object $T$ in $\K^{\rm b}(\proj\Lambda)$ satisfying $\Lambda[-1]\geq T\geq \Lambda$ 
such that $T[-\ell+1]\geq P\geq T$; 
\item[{\rm (A3)}] For any basic silting object $T$ in $\K^{\rm b}(\proj\Lambda)$, 
there exists a basic tilting object which is connected to $T$. 
\end{enumerate}
\end{proposition}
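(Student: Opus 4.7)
My plan is to combine (A3), a derived-equivalence reduction, and an induction built on (A1)--(A2) to reduce the problem to a single range inequality, then dispose of a remaining shift issue. First, (A3) applied to $\Lambda=A$ shows every basic silting object of $\TT$ is connected to some basic tilting object, so it suffices to show that any two basic tilting objects $T_1,T_2\in\TT$ are connected. Setting $\Lambda:=\End(T_1)$ and using the derived equivalence $F:\TT\xrightarrow{\sim}\K^{\rm b}(\proj\Lambda)$ with $F(T_1)=\Lambda$ (which takes irreducible mutation to irreducible mutation), it is enough to connect $\Lambda$ with the basic tilting object $P:=F(T_2)$ inside $\K^{\rm b}(\proj\Lambda)$.

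The core of the argument is a Main Lemma: for any algebra $\Lambda$ derived equivalent to $A$ and any basic tilting $P$ in $\K^{\rm b}(\proj\Lambda)$ with $\Lambda[-\ell]\geq P\geq\Lambda$ for some $\ell\geq 1$, $P$ is connected to $\Lambda$. I prove this by induction on $\ell$. The base $\ell=1$ is exactly (A1), since a tilting object is silting. For $\ell\geq 2$, apply (A2) to obtain a basic tilting $T$ with $\Lambda[-1]\geq T\geq\Lambda$ and $T[-\ell+1]\geq P\geq T$; by (A1), $T$ is connected to $\Lambda$. Passing through the derived equivalence $\K^{\rm b}(\proj\Lambda)\simeq\K^{\rm b}(\proj\End(T))$ sending $T$ to $\End(T)$, the object $P$ becomes a basic tilting $P'$ with $\End(T)[-\ell+1]\geq P'\geq\End(T)$, and the induction hypothesis gives $P'$ connected to $\End(T)$; transporting back, $P$ is connected to $T$, hence to $\Lambda$.

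To reach the hypothesis of the Main Lemma from an arbitrary tilting $P$, I first note that all shifts of $\Lambda$ are connected to $\Lambda$ in $\K^{\rm b}(\proj\Lambda)$: (A1) with $T=\Lambda$ gives $\Lambda$ connected to $\Lambda[-1]$, and applying the shift functor (which commutes with silting mutation) iteratively yields $\Lambda[k]$ connected to $\Lambda$ for every $k\in\Z$. Transporting this fact from $\K^{\rm b}(\proj\End(P))$ back to $\K^{\rm b}(\proj\Lambda)$ via the derived equivalence sending $\End(P)$ to $P$, every shift $P[k]$ is connected to $P$. Since $P$ is silting, there are integers $m\leq n$ with $\Lambda[m]\geq P\geq\Lambda[n]$; putting $P_0:=P[-n]$ gives $\Lambda[m-n]\geq P_0\geq\Lambda$. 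If $m=n$ then $P_0=\Lambda$ by the partial order on silting objects, so $P=\Lambda[n]$ is a shift of $\Lambda$ and we are done; otherwise the Main Lemma with $\ell=n-m\geq 1$ gives $P_0$ connected to $\Lambda$, and the shift-connectedness of $P$ gives $P$ connected to $P_0$, hence to $\Lambda$.

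The main obstacle I expect is the mismatch between the partial order $\geq$ and connectedness: $P$ and $\Lambda$ need not be comparable in $\geq$ (the shift range of $P$ may straddle $0$), so neither (A2) nor the Main Lemma applies to $(P,\Lambda)$ directly. The workaround --- shifting $P$ into the correct range, and exploiting that every shift of $P$ is connected to $P$ --- relies essentially on (A1) being available not only for $\Lambda$ but also for $\End(P)$, which is precisely why the hypothesis of the proposition must quantify over \emph{all} algebras derived equivalent to $A$.
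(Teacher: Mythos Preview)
Your proof is correct and follows essentially the same approach as the paper: reduce to tilting objects via (A3), normalize via shifts and (A1), then run an induction on $\ell$ using (A2) and the derived equivalence through $\End(T)$. The paper streamlines two of your steps: it connects every tilting $P$ directly to $A$ (rather than first passing to $\Lambda=\End(T_1)$), and it handles the shift issue by noting that $P_0$ connected to $\Lambda$ gives $P=P_0[n]$ connected to $\Lambda[n]$, which is connected to $\Lambda$ --- so the detour through shift-connectedness of $P$ via $\End(P)$ is not needed.
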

\begin{proof}
By (A3), we shall show that any basic tilting object is connected to $A$. 
Let $P$ be a basic tilting object in $\K^{\rm b}(\proj A)$. 
Note that all shifts of $A$ are connected to $A$ by (A1). 
Therefore we can assume $A[-\ell]\geq P\geq A$ for some $\ell>0$. 
Let $T$ be a basic tilting object of $\K^{\rm b}(\proj A)$ as in (A2). 
Since $A[-1]\geq T\geq A$, 
we observe that $T$ is connected to $A$ by (A1). 
Since $T$ is a tilting object, 
we obtain an equivalence $F:\K^{\rm b}(\proj A)\xrightarrow{\sim} \K^{\rm b}(\proj\ehomo{\TT}{T})$ of triangulated categories 
which sends $T$ to $\ehomo{\TT}{T}$. 
Applying (A2) to $\ehomo{\TT}{T}$, 
we have a basic tilting object $T'$ in $\K^{\rm b}(\proj \ehomo{\TT}{T})$ 
satisfying $FT[-\ell+1]\geq T'[-\ell+2]\geq FP\geq T'\geq FT$. 
Hence we get a sequence 
\[A[-\ell]\geq T_1[-\ell+1]\geq T_2[-\ell+2]\geq P\geq T_2\geq T_1\geq A\]
of basic tilting objects in $\K^{\rm b}(\proj A)$, where $T_1:=T$ and $T_2:=F^{-1}T'$. 
As $T_1[-1]\geq T_2\geq T_1$, 
we obtain that $T_2$ is connected to $T_1$ by (A1). 
This implies that $T_2$ is connected to $A$. 
Continuing the argument above, 
we see that $P$ is connected to $A$. 
\end{proof}


The aim of this section is to prove the following result. 

\begin{theorem}\label{symm finite case}
$\K^{\rm b}(\proj A)$ is tilting-connected 
if $A$ is a representation-finite symmetric algebra. 
\end{theorem}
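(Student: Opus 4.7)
The plan is to apply Proposition \ref{A}: it suffices to verify the three conditions (A1), (A2) and (A3) for every algebra $\Lambda$ derived equivalent to $A$. As observed in Section \ref{intro}, both the symmetric property and representation-finiteness of self-injective algebras are derived-equivalence invariants, so any such $\Lambda$ is again a representation-finite symmetric algebra; I carry these hypotheses throughout. Condition (A3) is then immediate from Example \ref{symmetric silting=tilting}, since every silting object over a symmetric algebra is automatically tilting, so the given silting object is itself the required tilting object.

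For (A1), my plan is first to show that the set
$\SSS:=\{V\in\silt\K^{\rm b}(\proj\Lambda)\mid\Lambda[-1]\geq V\geq \Lambda\}$
is finite, and then to apply Theorem \ref{finite mutation2} to the two pairs $(\Lambda[-1],T)$ and $(T,\Lambda)$; this yields that $T$ is left-connected to $\Lambda[-1]$ and that $\Lambda$ is left-connected to $T$, so $T$ is connected to both. By Proposition \ref{prop:partial order and length} every $V\in\SSS$ is two-term, and Theorem \ref{silting induced by torsion pair} (combined with $\nu=\mathrm{id}$ in the symmetric case and automatic covariantly finiteness from Remark \ref{remark:contrav_finite_finite}) produces a two-term tilting complex $T_\CCC$ from each torsion class $\CCC\subseteq\mod\Lambda$. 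Representation-finiteness of $\Lambda$ forces the lattice of torsion classes to be finite, since each torsion class is determined by its indecomposable members, of which there are only finitely many. A routine verification shows that every two-term silting object arises this way, so $|\SSS|<\infty$.

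The main obstacle is (A2). Given a tilting object $P$ with $\Lambda[-\ell]\geq P\geq \Lambda$ (so that $P$ may be represented by a complex with cohomology concentrated in degrees $0,\ldots,\ell$), the plan is to construct the required two-term tilting complex $T$ as $T:=T_\CCC$ via Theorem \ref{silting induced by torsion pair} for a carefully chosen covariantly finite torsion class $\CCC\subseteq\mod\Lambda$ built from $P$ (a natural candidate being $\CCC=\operatorname{Fac}(H^0(P))$ or a related orthogonal construction of the form ${}^\perp X$ with $X$ assembled from the higher cohomology of $P$, in the spirit of Example \ref{lemma:torsion_ex}). That $T$ is actually a two-term \emph{tilting} complex, not merely silting, follows from Theorem \ref{silting induced by torsion pair} together with the symmetric identity $\nu\CCC=\CCC$. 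The crux of the argument is to verify the sandwich $T[-\ell+1]\geq P\geq T$: via Lemma \ref{Serre} and the symmetric hypothesis, both inequalities reduce to vanishing statements on $\Hom(T,P[j])$ in prescribed ranges of $j$, which can be checked directly from the explicit shape $T^0=P_1\oplus\nu^{-1}V$, $T^1=P_0$ of $T_\CCC$ (Definition \ref{asilting}) combined with the Ext-projective and Ext-injective characterizations of Lemma \ref{lemma:torsion_Ext} and the cohomological bounds on $P$. The delicate point is to arrange $\CCC$ so that under the derived equivalence $\K^{\rm b}(\proj\Lambda)\xrightarrow{\sim}\K^{\rm b}(\proj\End(T))$ the image of $P$ has cohomological spread strictly less than $\ell$, feeding the inductive descent on $\ell$ built into the proof of Proposition \ref{A}.
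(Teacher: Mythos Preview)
Your overall framework via Proposition \ref{A} matches the paper's, and your treatment of (A3) is correct. However, there are issues with both (A1) and (A2).

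For (A1), the paper's argument (Lemma \ref{A1}) is simpler than yours: by Proposition \ref{prop:partial order and length} every $V\in\SSS$ has length at most $2$, and every two-term complex in $\K^{\rm b}(\proj\Lambda)$ is, up to shift and projective direct summand, the minimal projective presentation of some $\Lambda$-module. Representation-finiteness of $\Lambda$ then immediately bounds $|\SSS|$, and Theorem \ref{finite mutation2} finishes as you say. Your route through torsion classes reaches the same conclusion, but the claim that ``every two-term silting object arises this way'' (i.e., as $T_\CCC$ for some torsion class $\CCC$) is not a routine verification from the results available in this paper; it is essentially the bijection underlying $\tau$-tilting theory and would require independent justification here.

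The real gap is in (A2). You have correctly identified that one should take $T=T_\CCC$ for a suitable torsion class, but your candidates for $\CCC$ are wrong. The paper's choice (Lemma \ref{lemma:main}) is $\CCC={}^\perp H^0(\nu P)$, which for symmetric $\Lambda$ is simply ${}^\perp H^0(P)$ --- note this uses the \emph{lowest}-degree cohomology of $P$, not ``higher cohomology'' as you suggest, and it is the left perpendicular of a single module rather than $\operatorname{Fac}$ of anything. With this specific choice the inequality $P\geq T$ follows because $H^1(T)\in\CCC$ by construction, while $T[-\ell+1]\geq P$ hinges on the observation that
\[
\Hom_\Lambda(H^\ell(P),H^0(\nu P))\simeq\Hom_{\K^{\rm b}(\mod\Lambda)}(P[\ell],\nu P)\simeq D\Hom_\TT(P,P[\ell])=0,
\]
which forces $H^\ell(P)\in\CCC$. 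Neither $\operatorname{Fac}(H^0(P))$ nor ${}^\perp(\text{higher cohomology of }P)$ makes these two verifications go through; the precise choice of $\CCC$ is the heart of the argument, not a detail to be filled in later.
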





To prove this theorem, we check that the three conditions in Proposition \ref{A} are satisfied. 

Let us start with an observation on satisfying the condition (A1).

\begin{lemma}\label{A1}
If $A$ is representation-finite, 
then there exist only finitely many $T\in\silt\TT$ satisfying $A[-1]\geq T\geq A$. 
Hence the condition {\rm{(A1)}} holds.
\end{lemma}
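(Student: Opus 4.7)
The plan is to reduce the finiteness claim to the finiteness of torsion classes in $\mod A$ via the construction of Section \ref{torsion}, and then to deduce condition (A1) from Theorem \ref{finite mutation2}.

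First I would observe that by Proposition \ref{prop:partial order and length} (applied with $n=-1$ and $\ell=2$), any $T\in\silt\TT$ with $A[-1]\geq T\geq A$ has length at most $2$, and can be represented as a two-term complex $T^0\xrightarrow{d}T^1$ of projective modules in degrees $0$ and $1$. I would then associate to each such $T$ the torsion class $\CCC(T):=\operatorname{Fac}(H^1(T))\subseteq\mod A$, where $H^1(T)=\operatorname{coker}(d)$, and show that the construction of Definition \ref{asilting} applied to $\CCC(T)$ recovers $T$, i.e.\ $T\simeq T_{\CCC(T)}$. Since $A$ is representation-finite, $\mod A$ contains only finitely many torsion classes (each being determined by which of the finitely many indecomposables it contains), and all of them are covariantly finite by Remark \ref{remark:contrav_finite_finite}; combined with the identification $T\simeq T_{\CCC(T)}$, this yields the asserted finiteness.

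Granted this, I would verify (A1) by applying Theorem \ref{finite mutation2} twice. For $T\in\silt\TT$ with $A[-1]\geq T\geq A$, both $\{P\in\silt\TT : A[-1]\geq P\geq T\}$ and $\{P\in\silt\TT : T\geq P\geq A\}$ are subsets of the finite set just produced. Applying Theorem \ref{finite mutation2} to $(A[-1],T)$ shows that $T$ is left-connected to $A[-1]$, and applying it to $(T,A)$ shows that $A$ is left-connected to $T$; either chain of irreducible mutations witnesses that $T$ is connected (in the sense of Definition \ref{silting connected}) to both $A$ and $A[-1]$, as required by (A1).

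The main obstacle will be verifying the identification $T\simeq T_{\CCC(T)}$; this amounts to the familiar correspondence between two-term silting complexes and functorially finite torsion classes, recast in the paper's framework. Concretely, I would need to check that (i) the indecomposable Ext-projectives of $\CCC(T)$ coincide with the indecomposable direct summands of $H^1(T)$, so that the non-stalk part of $T^0\xrightarrow{d}T^1$ is a minimal projective presentation of $H^1(T)$ as in Definition \ref{asilting}, and (ii) the remaining stalk summands of $T$ in degree $0$ are of the form $\nu^{-1}V$, where $V$ is the direct sum of indecomposable injectives in $\CCC(T)^\perp$. Both should follow from the presilting condition on $T$ together with Lemma \ref{lemma:torsion_Ext}.
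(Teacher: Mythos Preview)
Your proposal is correct, but it takes a substantially longer route than the paper. The paper argues by direct counting: by Proposition~\ref{prop:partial order and length} any such $T$ is a two-term complex, and every minimal basic two-term complex in $\TT$ is, up to a projective stalk summand in degree $0$, a minimal projective presentation of the module $H^1(T)$. Since $A$ is representation-finite there are only finitely many indecomposable modules, hence only finitely many indecomposable two-term complexes (each being either a shifted stalk projective or the minimal presentation of an indecomposable module), and therefore only finitely many basic two-term complexes altogether. The deduction of (A1) via Theorem~\ref{finite mutation2} is then exactly as you describe.

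Your route through the torsion-class construction is more conceptual: establishing $T\simeq T_{\CCC(T)}$ is essentially one half of the now-standard bijection between two-term silting complexes and functorially finite torsion classes. But that bijection is not proved in the paper (only the direction $\CCC\mapsto T_\CCC$ appears, in Theorem~\ref{silting induced by torsion pair}), and your points (i) and (ii) together amount to a genuine independent argument---roughly the content of the support $\tau$-tilting correspondence. The paper's counting argument sidesteps all of this: it never needs to know which torsion class corresponds to a given $T$, nor that $T\mapsto\CCC(T)$ is injective, because finiteness is already visible at the level of indecomposable two-term complexes.
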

\begin{proof}
Note first that every complex of length $2$ in $\TT$ is 
a shift of a projective presentation of an $A$-module up to projective direct summand. 
Since $A$ is representation-finite, 
there exist only finitely many $T\in\silt\TT$ satisfying $A[-1]\geq T\geq A$ 
by Proposition \ref{prop:partial order and length}. 
Hence it follows from Theorem \ref{finite mutation2} that $T$ is left-connected to $A[-1]$ and $A$ is left-connected to $T$. 
Thus the condition (A1) holds. 
\end{proof}

Next we show that a good algebra meets the condition (A2). 

\begin{lemma}\label{A2}
The condition {\rm{(A2)}} holds if $A$ is a representation-finite self-injective algebra. 
\end{lemma}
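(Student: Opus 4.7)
The plan is to build the desired length-two tilting object $T$ as $T_\CCC$ for a suitable torsion class $\CCC$ in $\mod A$, using Theorem \ref{silting induced by torsion pair}. Under our hypotheses the setup simplifies in two crucial ways. First, since $A$ is representation-finite, every torsion class in $\mod A$ is automatically covariantly finite by Remark \ref{remark:contrav_finite_finite}, so that hypothesis of Theorem \ref{silting induced by torsion pair} is free. Second, since $A$ is self-injective, the Nakayama functor $\nu$ restricts to an autoequivalence of $\mod A$, so the requirement $\nu \CCC \subseteq \CCC$ needed to upgrade ``silting'' to ``tilting'' in Theorem \ref{silting induced by torsion pair} is a combinatorial condition (closure under a fixed autoequivalence) rather than a genuine homological one.

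Next I would extract the torsion class from the data of $P$ in the spirit of Example \ref{Okuyama-Rickard}. Represent $P$ as a complex of projectives concentrated in cohomological degrees $[-\ell,0]$ and let $e \in A$ be the idempotent such that $(1-e)A$ is the maximal direct summand of $P^0$ whose indecomposable projective summands do not appear in any $P^{-i}$ with $i \geq 1$; equivalently, $eA$ picks out the summands of $P^0$ that ``continue downwards'' into $P$. Set $\CCC := {}^{\perp}(\nu(1-e)A)$, which is a torsion class by Example \ref{lemma:torsion_ex}. I would then check that $\CCC$ is $\nu$-stable, which amounts to showing that the idempotent $1-e$ is invariant under the Nakayama permutation; this is where the tilting (as opposed to merely silting) hypothesis on $P$ together with Lemma \ref{Serre} enters, forcing a symmetric pattern on the projective summands of $P$ across the degrees $-\ell,\ldots,0$. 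Applying Theorem \ref{silting induced by torsion pair} and unwinding Example \ref{Okuyama-Rickard} then produces a tilting complex $T := T_\CCC$ which, after matching the degree conventions of Definition \ref{asilting}, satisfies $\Lambda[-1] \geq T \geq \Lambda$.

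What remains is the verification of $P \geq T$ and $T[-\ell+1] \geq P$. Using the explicit description of $T$ in Example \ref{Okuyama-Rickard} as $((1-e)A)\oplus(P(eA(1-e)A)\to eA)$, the inequality $P \geq T$ unpacks into vanishings of $\Hom_\TT(P,(1-e)A[i])$ and of the Hom into the two-term summand for $i\geq 1$; these follow from $A \geq P$ (which controls the positive cohomology of $P$) together with the defining property that indecomposable summands of $(1-e)A$ do not occur in $P^{-i}$ for $i\geq 1$. The dual inequality $T[-\ell+1] \geq P$ is translated via Lemma \ref{Serre} into a statement about $\nu P$ versus $\nu T$, where it is controlled by $P \geq \Lambda$ (i.e., the upper support bound of $\nu P$) and the fact that $T$ lives in only two degrees; here $\nu$-stability of $\CCC$ enters again to make the translation symmetric.

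The principal obstacle is the correct choice of the idempotent $e$: it must be chosen simultaneously to be $\nu$-stable, small enough that $P \geq T$ holds, and large enough that $T[-\ell+1] \geq P$ holds. All three constraints are in tension, and balancing them is precisely what uses the tilting (rather than silting) assumption on $P$ together with the self-injectivity of $A$, since these are exactly the ingredients that, through Lemma \ref{Serre}, turn the two-sided bound $\Lambda[-\ell]\geq P\geq \Lambda$ into a Nakayama-symmetric constraint compatible with the Okuyama-Rickard construction.
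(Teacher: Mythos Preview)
Your overall architecture is right: build $T=T_\CCC$ from a torsion class $\CCC$ via Theorem~\ref{silting induced by torsion pair}, use representation-finiteness to get covariant finiteness for free, and use self-injectivity plus the tilting hypothesis on $P$ to get $\nu\CCC\subseteq\CCC$ so that $T$ is tilting rather than merely silting. But the specific torsion class you pick is the wrong one, and that is exactly the ``principal obstacle'' you flag without resolving.

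The paper does \emph{not} use an Okuyama--Rickard class ${}^{\perp}(\nu(1-e)A)$ determined by which indecomposable projectives occur in which terms of $P$. It takes the cohomological choice
\[
\CCC \;=\; {}^{\perp}H^{0}(\nu P),
\]
which records the actual differentials of $P$, not just the shape of its terms. With this $\CCC$ the two inequalities fall out in one line each via Lemma~\ref{Serre}. For $P\geq T$: the only nontrivial vanishing is $\Hom_\TT(P,T[1])\simeq D\Hom_A(H^{1}(T),H^{0}(\nu P))$, and $H^{1}(T)\in\CCC={}^{\perp}H^{0}(\nu P)$ by construction. For $T[-\ell+1]\geq P$: one first checks $H^{\ell}(P)\in\CCC$ directly from the \emph{presilting} condition, since $\Hom_A(H^{\ell}(P),H^{0}(\nu P))\simeq D\Hom_\TT(P,P[\ell])=0$; then $\Hom_\TT(T,P[\ell])\simeq D\Hom_A(H^{\ell}(P),H^{0}(\nu T))=0$ because $H^{0}(\nu T)\in\CCC^{\perp}$ by Lemma~\ref{lemma:torsion_Ext}. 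The $\nu$-stability of $\CCC$ needed for the tilting upgrade is immediate from $P\simeq\nu P$, which is Theorem~\ref{tilting stable}: since $A$ is self-injective, $\nu$ commutes with $H^{0}$, so $\add H^{0}(\nu P)=\add\nu^{-1}H^{0}(\nu P)$.

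Your idempotent recipe cannot substitute for this. Knowing that an indecomposable summand of $(1-e)A$ does not occur among the $P^{-i}$ for $i\geq 1$ says nothing about $\Hom$ in the homotopy category: the relevant vanishings involve maps into $H^{0}(\nu P)$ and out of $H^{\ell}(P)$, which depend on the differentials of $P$, not only on the isomorphism types of its terms. Concretely, to get $T[-\ell+1]\geq P$ with your class you would need $H^{\ell}(P)\in{}^{\perp}(\nu(1-e)A)$, i.e.\ $H^{\ell}(P)(1-e)=0$; nothing in ``$(1-e)A$ does not occur in lower terms'' forces this on the cokernel of $P^{\ell-1}\to P^{\ell}$. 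So the tension you describe among the three constraints on $e$ is genuine and is not resolved by your proposal; the paper avoids it entirely by choosing $\CCC$ cohomologically.
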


To prove this lemma, we need the following important result. 

\begin{lemma}\label{lemma:main}
Let $P$ be a presilting object in $\TT$ 
with $A[-\ell]\geq P\geq A$ for a positive integer $\ell$. 
If ${}^\perp H^0(\nu P)$ is covariantly finite in $\mod A$, 
then there exists a silting object $T$ with $A[-1]\geq T\geq A$ 
satisfying $T[-\ell+1]\geq P\geq T$. 
Moreover if $A$ is self-injective and $\add P=\add\nu P$, then $T$ is a tilting object.
\end{lemma}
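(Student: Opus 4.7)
The plan is to take $\CCC := {}^\perp H^0(\nu P)$, a torsion class in $\mod A$ by Example \ref{lemma:torsion_ex}, which is covariantly finite by hypothesis. Then Theorem \ref{silting induced by torsion pair} immediately gives that $T := T_\CCC$ is a silting object in $\TT$, and since $T$ is by construction a two-term complex concentrated in degrees $0$ and $1$ (with $T^0 = P_1 \oplus \nu^{-1}V$, $T^1 = P_0$, and differential $(p,0)$ in the notation of Definition \ref{asilting}), Proposition \ref{prop:partial order and length} already yields $A[-1] \geq T \geq A$.

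What remains is to prove $T[-\ell+1] \geq P \geq T$. Represent $P$ in degrees $0,1,\dots,\ell$ using Proposition \ref{prop:partial order and length}; then $\Hom_\TT(P, T[i])$ for $i \geq 2$ and $\Hom_\TT(T, P[j])$ for $j \geq \ell+1$ vanish on support grounds. The two remaining cases I would handle using Lemma \ref{Serre} combined with direct chain-level computation based on the explicit shape of $T$ and the identification $\ker(\nu p) = \tau X$ coming from the Auslander--Reiten formula applied to $P_1 \xrightarrow{p} P_0 \to X \to 0$; this yields
\[
\Hom_\TT(P, T[1]) \simeq D\Hom_A(X, H^0(\nu P)) \quad\text{and}\quad \Hom_\TT(T, P[\ell]) \simeq D\Hom_A\bigl(H^\ell(P),\, \tau X \oplus V\bigr).
\]
The first vanishes since $X \in \CCC$; for the second, $\tau X \in \CCC^\perp$ by Lemma \ref{lemma:torsion_Ext}(ii) and $V \in \CCC^\perp$ by construction, so it is enough to show $H^\ell(P) \in \CCC$. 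Here the presilting hypothesis on $P$ finally enters: the analogous computation gives $\Hom_\TT(P, \nu P[-\ell]) \simeq \Hom_A(H^\ell(P), H^0(\nu P))$, and Lemma \ref{Serre} identifies the left-hand side with $D\Hom_\TT(P, P[\ell]) = 0$.

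For the final clause, in the self-injective case $\nu$ is an exact auto-equivalence of $\mod A$, and $\add P = \add\nu P$ with $P$ basic forces $\nu P \simeq P$ in $\TT$, hence $H^0(\nu P) = \nu H^0(P) \simeq H^0(P)$. Transporting the defining condition of $\CCC$ through the auto-equivalence $\nu$ then shows $\nu\CCC = \CCC$, so the last assertion of Theorem \ref{silting induced by torsion pair} promotes $T$ from silting to tilting. The main obstacle is the chain-level bookkeeping for the three hom-spaces $\Hom_\TT(T, \nu P[-1])$, $\Hom_\TT(P, \nu T[-\ell])$, and $\Hom_\TT(P, \nu P[-\ell])$: one has to see that the Auslander--Reiten identification $\tau X = \ker(\nu p)$ is exactly what makes the two pieces of $T^0$ (namely $P_1$ and $\nu^{-1}V$) absorb the obstructions produced by $H^\ell(P)$ once it is known to lie in $\CCC$.
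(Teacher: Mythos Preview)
Your proof is correct and mirrors the paper's argument: both take $T = T_\CCC$ with $\CCC = {}^\perp H^0(\nu P)$, invoke Theorem \ref{silting induced by torsion pair}, and reduce $P \geq T$ and $T[-\ell+1] \geq P$ via Lemma \ref{Serre} to the vanishing of $\Hom_A(H^1(T), H^0(\nu P))$ and $\Hom_A(H^\ell(P), H^0(\nu T))$, the latter using the presilting hypothesis to place $H^\ell(P)$ in $\CCC$. One cosmetic remark on the final clause: the lemma does not assume $P$ basic, but you do not actually need $\nu P \simeq P$---from $\add P = \add\nu P$ one gets $\add H^0(\nu P) = \add H^0(P)$ directly, and hence $\nu\CCC \subseteq \CCC$, which is exactly the hypothesis Theorem \ref{silting induced by torsion pair} requires.
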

\begin{proof}
Note that $P$ is isomorphic to a complex $(\cdots\to0\to P^0\to\cdots\to P^\ell\to0\to\cdots)$ 
by Proposition \ref{prop:partial order and length}. 
Put $X:=H^0(\nu P)$ and $\CCC:={}^\perp X$. 
By Example \ref{lemma:torsion_ex}, the full subcategory $\CCC$ is a torsion class in $\mod A$. 
Since $\CCC$ is covariantly finite in $\mod{A}$, 
$T:=T_\CCC$ is a silting object by Theorem \ref{silting induced by torsion pair}. 

(i) We show $P\geq T$. 
It is enough to prove $\homo{\TT}{P}{T[1]}=0$. 
Since $H^1(T)$ belongs to $\CCC$, 
by Lemma \ref{Serre} we obtain isomorphisms
\[\def\arraystretch{1.5}
\begin{array}{rl}
\homo{\TT}{P}{T[1]} &\simeq D\homo{\K^{\rm b}(\mod A)}{T[1]}{\nu P} \\
                    &\simeq D\homo{A}{H^1(T)}{X} \\
                    &=0.
\end{array}\]

(ii) We show $T[-\ell+1]\geq P$. 
We only have to prove $\homo{\TT}{T}{P[\ell]}=0$. 
Since by Lemma \ref{Serre} we obtain isomorphisms
\[\homo{A}{H^{\ell}(P)}{X}\simeq \homo{\K^{\rm b}(\mod A)}{P[\ell]}{\nu P}\simeq D\homo{\TT}{P}{P[\ell]}=0,\]
we see that $H^\ell (P)$ belongs to $\CCC$. 
By Lemma \ref{lemma:torsion_Ext}, we observe that $H^0(\nu T)$ is in $\CCC^\perp$. 
By Lemma \ref{Serre}, we get isomorphisms
\[\def\arraystretch{1.5}
\begin{array}{rl}
\homo{\TT}{T}{P[\ell]} &\simeq D\homo{\K^{\rm b}(\mod A)}{P[\ell]}{\nu T} \\
                       &\simeq D\homo{A}{H^\ell (P)}{H^0(\nu T)} \\
                       &=0.
\end{array}
\] 
Thus the first assertion holds. 

Assume that $A$ is a self-injective algebra and $\add P=\add\nu P$. 
Then $\nu$ and $\nu^{-1}$ commute with $H^0$. 
Since $\add P=\add\nu P$, we have $\add X=\add\nu^{-1}X$. 
This implies $\CCC\supseteq\nu\CCC$. 
Hence the last assertion follows from Theorem \ref{silting induced by torsion pair}.
\end{proof}

Now we are ready to prove Lemma \ref{A2}. 

Let $P$ be a basic tilting object with $A[-\ell]\geq P\geq A$ for some $\ell>0$. 
Take a torsion class $\CCC$ as in the proof of Lemma \ref{lemma:main}. 
By \cite[Theorem 2.1]{AR} (see Theorem \ref{tilting stable}), we have an isomorphism $P\simeq \nu P$. 
Since $\CCC$ is covariantly finite by Remark \ref{remark:contrav_finite_finite}, 
we obtain a tilting object $T$ with $A[-1]\geq T\geq A$ and $T[-\ell+1]\geq P\geq T$ by Lemma \ref{lemma:main}. 
\qed
\medskip

Now we are ready to prove Theorem \ref{symm finite case}. 

We have (A1) by Lemma \ref{A1} and (A2) by Lemma \ref{A2}. 
Since $A$ is a symmetric algebra, any silting object is tilting by Example \ref{symmetric silting=tilting}. 
This immediately implies (A3). 
Hence it follows from Proposition \ref{A} that $\TT$ is tilting-connected. 
%
\qed
\medskip 

We now show the following statement, which is stronger than Theorem \ref{symm finite case}. 

\begin{theorem}
$\K^{\rm b}(\proj A)$ is silting-discrete 
if $A$ is a representation-finite symmetric algebra.
\end{theorem}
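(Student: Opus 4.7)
The plan is to verify condition (iii) of the proposition characterizing silting-discreteness (the unlabelled proposition immediately preceding Corollary \ref{finite transitivity}) with $M = A$. Concretely, I will prove by induction on $\ell \geq 0$ the following uniform statement: for every representation-finite symmetric algebra $\Lambda$ and every non-negative integer $\ell$, the set of basic silting objects $P$ in $\K^{\rm b}(\proj \Lambda)$ satisfying $\Lambda[-\ell] \geq P \geq \Lambda$ is finite. The shift functor $[\ell]$ induces an order-preserving bijection between this set and $\{P : \Lambda \geq P \geq \Lambda[\ell]\}$, so specialising to $\Lambda = A$ yields condition (iii) and hence silting-discreteness of $\K^{\rm b}(\proj A)$.

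The base cases are routine: for $\ell = 0$ the only possibility is $P = \Lambda$ by antisymmetry of $\geq$ on silting objects (Theorem \ref{partial order}), and $\ell = 1$ is exactly Lemma \ref{A1} applied to $\Lambda$.

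For the inductive step, assume $\ell \geq 2$ and the uniform claim at $\ell - 1$. Pick any basic silting object $P$ with $\Lambda[-\ell] \geq P \geq \Lambda$; since $\Lambda$ is symmetric, $P$ is tilting by Example \ref{symmetric silting=tilting}. Lemma \ref{A2} then produces a basic tilting complex $T$ with $\Lambda[-1] \geq T \geq \Lambda$ and $T[-\ell+1] \geq P \geq T$. The $\ell = 1$ base case leaves only finitely many candidates for $T$, so it suffices to fix $T$ and bound the number of $P$ lying in the sub-interval $T[-\ell+1] \geq P \geq T$. The tilting complex $T$ induces a triangle equivalence $F : \K^{\rm b}(\proj \Lambda) \xrightarrow{\sim} \K^{\rm b}(\proj \Gamma)$ with $\Gamma := \End(T)$ and $F(T) = \Gamma$, which is order-preserving and restricts to a bijection on basic silting objects. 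Such $P$ therefore correspond bijectively to basic silting objects $F(P)$ with $\Gamma[-\ell+1] \geq F(P) \geq \Gamma$. Since representation-finiteness and being symmetric are both derived invariant, $\Gamma$ is again representation-finite symmetric, and the inductive hypothesis applied to $\Gamma$ at $\ell - 1$ bounds the number of such $F(P)$ finitely. This closes the induction.

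The only non-routine ingredient is the invocation of Lemma \ref{A2}, which supplies the intermediate tilting complex $T$ used to descend from an interval of length $\ell$ to one of length $\ell - 1$ over a derived-equivalent representation-finite symmetric algebra. Everything else consists of a formal shift-and-derived-equivalence manipulation together with the standard fact that $\End(T)$ inherits representation-finiteness and the symmetric property from $\Lambda$.
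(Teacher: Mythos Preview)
Your proof is correct and follows the same inductive strategy as the paper: use Lemma \ref{A2} to produce an intermediate tilting complex $T$ and reduce from an interval of length $\ell$ to one of length $\ell-1$, with the base case supplied by Lemma \ref{A1}. You make explicit the derived-equivalence passage to $\Gamma = \End(T)$ and the accompanying uniform induction over all representation-finite symmetric algebras, which the paper compresses into a bare ``by the induction hypothesis''; this is a clarification of the same argument, not a different route.
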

\begin{proof}
Note that any silting object in $\TT$ is tilting, since $A$ is symmetric. 
Let $\ell>0$. 
We prove that there exist only finitely many $P\in\silt\TT$ 
satisfying $A[-\ell]\geq P\geq A$ by induction on $\ell$.
If $\ell=1$, then it is evident by Lemma \ref{A1}. 
Assume $\ell>1$. 
It follows from Lemma \ref{A2} 
that for any basic tilting object $P$ in $\TT$ with $A[-\ell]\geq P\geq A$, 
there exists a basic tilting object $T_P$ satisfying $A[-\ell]\geq T_P[-\ell+1]\geq P\geq T_P\geq A$. 
By Lemma \ref{A1}, the set $\{T_P\in\silt\TT\ |\ A[-\ell]\geq P\geq A\}$ is finite. 
Hence by the induction hypothesis, the assertion holds. 
\end{proof}

As a consequence, we obtain a Bongartz-type Lemma for representation-finite symmetric algebras. 

\begin{corollary}\cite[Theorem 3.6]{AH}\label{symm finite Bongartz-type}
If $A$ is a representation-finite symmetric algebra, 
then any pretilting object $T$ in $\TT$ is partial tilting. 
\end{corollary}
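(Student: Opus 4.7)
The plan is to combine the silting-discreteness theorem just proved with the Bongartz-type Lemma of Theorem \ref{finite mutation1}. Because $A$ is symmetric, Example \ref{symmetric silting=tilting} identifies pretilting with presilting and silting with tilting, so it suffices to show that every presilting object $U$ in $\TT$ is a partial silting object.

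First I would sandwich $U$ between two shifts of $A$: since $U$ is a bounded complex of projectives, the cohomologies $H^i(U)$ vanish outside a bounded range, and by Lemma \ref{Serre} the same is true for $H^i(\nu U)$. Unwinding the relation $\geq$ using Lemma \ref{Serre} (the condition $A[n]\geq U$ translates to the vanishing of $H^j(U)$ for $j>-n$, and $U\geq A[m]$ translates to the vanishing of $H^j(\nu U)$ for $j<-m$), one obtains integers $n\le m$ with
\[
A[n] \ \geq\ U \ \geq\ A[m].
\]
The silting object $T:=A[n]$ then satisfies $T\geq U$.

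Next I would invoke the silting-discreteness of $\TT$ (the theorem just proved) applied to the silting objects $T=A[n]$ and $A[m]$: there are only finitely many $V\in\silt\TT$ satisfying $T\geq V\geq A[m]$. Any $V\in\silt\TT$ lying in the smaller interval $T\geq V\geq U$ automatically lies in the larger interval $T\geq V\geq A[m]$, so only finitely many silting objects satisfy $T\geq V\geq U$. Theorem \ref{finite mutation1} then produces a silting object of which $U$ is a direct summand, so $U$ is partial silting, and hence partial tilting by Example \ref{symmetric silting=tilting}.

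No step of this plan is a genuine obstacle: the argument is essentially a direct repackaging of silting-discreteness through Theorem \ref{finite mutation1}, and the only technical point is the routine verification that a bounded complex can be wedged between two shifts of $A$ in the partial order $\geq$.
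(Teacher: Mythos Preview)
Your argument has a genuine gap at the step where you pass from the ``smaller interval'' $T\geq V\geq U$ to the ``larger interval'' $T\geq V\geq A[m]$. You are asserting that $V\geq U$ together with $U\geq A[m]$ forces $V\geq A[m]$, but the relation $\geq$ is not transitive in this configuration: Theorem~\ref{partial order}(2)(i) supplies transitivity only when the \emph{middle} term is silting, and here the middle term $U$ is merely presilting. This is not a technicality. For the symmetric algebra of the paper's final example with $\ell=1$, take $U=P_1$; then $A\geq P_1\geq A$, so $n=m=0$. Put $V=\mu^{+}_{P_2}(A)=P_1\oplus\operatorname{cone}(P_2\to P_1)$. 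A direct computation gives $\Hom_{\TT}(V,P_1[1])=0$, so $A\geq V\geq U$, yet $\Hom_{\TT}(V,P_2[1])\neq 0$, so $V\not\geq A=A[m]$. Hence $\{V\in\silt\TT:A[n]\geq V\geq U\}$ is \emph{not} contained in $\{V\in\silt\TT:A[n]\geq V\geq A[m]\}$, and the finiteness hypothesis of Theorem~\ref{finite mutation1} is never established.

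The paper's proof proceeds by a different mechanism and does not invoke silting-discreteness at all. Using that $A$ is symmetric one has $T\simeq\nu T$, so Lemma~\ref{lemma:main} applies; iterating it exactly as in the proof of Proposition~\ref{A} produces a tilting object $W$ with $W[-1]\geq T\geq W$, and then the length-two Bongartz lemma (Proposition~\ref{Bongartz-type}) finishes. The point is that Lemma~\ref{lemma:main} shrinks the interval containing $T$ from the \emph{silting} side, so no transitivity through the presilting object is ever needed.
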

\begin{proof}
Since $A$ is symmetric, we clearly have an isomorphism $T\simeq \nu T$. 
So we can apply Lemma \ref{lemma:main}. 
By the same argument as in the proof of Proposition \ref{A}, 
we have a tilting object $U$ satisfying $U[-1]\geq T\geq U$. 
Hence it follows from Proposition \ref{Bongartz-type} that $T$ is a partial tilting object. 
\end{proof}

%

\begin{example}
Let $A$ be the algebra presented by the quiver 
$\xymatrix{
1 \ar[r]_{x_1} & 2 \ar[r]_{x_2} & 3 \ar@/_0.8em/[ll]_{x_3}
}$
with relations $x_1x_2x_3x_1=x_2x_3x_1x_2=x_3x_1x_2x_3=0$.
Then $A$ is a representation-finite symmetric algebra. 
Define an object $T$ of $\TT$ as a complex
\[T:=\bigoplus
\begin{cases}
\begin{array}{c}
\xymatrix@R=0.5pt{
{\rm (0th)} & {\rm (1st)} & {\rm (2nd)} \\
P_2 \ar[r]^{x_2x_3x_1} & P_2 \ar[r]^{x_1} & P_1 \\
P_2 &     &     \\
P_2 \ar[r]^{x_2} & P_3. &
}
\end{array}
\end{cases}\] 
We can easily check that $T$ is a tilting object. 
We observe 
${}^\perp H^0(T)=\add\left(S_1\oplus S_3\oplus \def\arraystretch{0.5}\left(\begin{array}{c}3\\1\end{array}\right)\right)$. 
By Lemma \ref{lemma:torsion_Ext}, it is obvious that  
$S_1$ and $\def\arraystretch{0.5}\left(\begin{array}{c}3\\1\end{array}\right)$ are Ext-projective modules in ${}^\perp H^0(T)$.
Hence we take a tilting object $T_1:=T_\CCC$ as a complex 
\[T_1=\bigoplus
\begin{cases}
\begin{array}{c}
\xymatrix@R=0.5pt{
{\rm (0th)} & {\rm (1st)} \\
P_2 \ar[r]^{x_1} & P_1 \\
P_2 & \\
P_2 \ar[r]^{x_3x_1} & P_3. 
}
\end{array}
\end{cases}\] 
We see that $T_1$ is an irreducible left mutation of $T$. 
Moreover, we have an isomorphism $A\simeq \mu^+_{P_2\to P_1}\mu^+_{P_2\to P_3}(T_1)$. 
Thus $A$ is left-connected to $T$. 
\end{example}

The silting quiver, which we will now define, is very useful for observing the behavior of silting objects 
under iterated irreducible silting mutation. 

\begin{definition}
The \emph{silting quiver} of $\TT$ is defined as follows:
\begin{itemize}
\item The set of vertices is $\silt\TT$;
\item We draw an arrow $T\to U$ if $U$ is an irreducible left mutation of $T$. 
\end{itemize}
To simplify the notation, we identify each basic silting object $T$ with $T[i]$ for all integers $i$ 
and use arrows $\xymatrix{T\ar[r]|(0.45){[1]}&U}$ which mean $\xymatrix{T\ar[r]&U[1]}$. 
\end{definition}

Note that the silting quiver of $\TT$ is connected if and only if $\TT$ is silting-connected. 

\begin{example}\cite[Example 2.48]{AI}
Let $A$ be the algebra presented by the quiver $\xymatrix{1\ar@<0.2em>[r]^{a}&2\ar@<0.2em>[l]^{b}}$ 
with relations $(ab)^\ell a=0=(ba)^\ell b$ ($\ell\geq1$). 
Then $A$ is a representation-finite symmetric algebra. 
By Theorem \ref{symm finite case} the silting quiver of $\TT$ is connected, 
and in fact it is the following quiver: 
\[
{\small{
\xymatrix@R=0.1pt @C=0.3cm{
&& & && && & && & & && & & & &&\\
&& & && && & && & & && P_1 \ar[r] & P_2 \ar[r] & P_2 \ar[r] & P_2 &&\\
&& & && && & && & & && 0 \ar[r] & 0 \ar[r] & 0 \ar[r] & P_2 &&\\
&& & && && & && P_1 \ar[r] & P_2 \ar[r] & P_2 && & & & &&\\
&& & && && & && 0 \ar[r] & 0 \ar[r] & P_2 && & & & &&\\   
&& & && && & && & & && P_1 \ar[r] & P_2 \ar[r] & P_2 \ar[r] & P_2 &&\\
&& & && && & && & & && P_1 \ar[r] & P_2 \ar[r] & P_2 \ar[r] & 0 &&\\
&& 0 \ar[r] & P_1 && && P_1 \ar[r] & P_2 && & & && & & & &&\\
&& P_2 \ar[r] & P_1 && && 0 \ar[r] & P_2 && & & && & & & &&\\
&& & && && & && & & && P_1 \ar[r] & P_1\oplus P_2 \ar[r] & P_2 \ar[r] & P_2 &&\\
&& & && && & && & & && 0 \ar[r] & P_1 \ar[r] & P_2 \ar[r] & P_2 &&\\
&& & && && & && P_1 \ar[r] & P_2 \ar[r] & P_2 && & & & &&\\
&& & && && & && P_1 \ar[r] & P_2 \ar[r] & 0 && & & & &&\\
&& & && && & && & & && P_1 \ar[r] & P_1\oplus P_2 \ar[r] & P_2 \ar[r] & P_2 &&\\
&& & && && & && & & && P_1 \ar[r] & P_2 \ar[r] & 0 \ar[r] & 0 &&\\
&& & && P_1\oplus P_2 && & && & & && & & & &&\\
&& & && && & && & & && P_1 \ar[r] & P_1 \ar[r] & P_1\oplus P_2 \ar[r] & P_2 &&\\
&& & && && & && & & && 0 \ar[r] & 0 \ar[r] & P_1 \ar[r] & P_2 &&\\
&& & && && & && P_1 \ar[r] & P_1 \ar[r] & P_2 && & & & &&\\
&& & && && & && 0 \ar[r] & P_1 \ar[r] & P_2 && & & & &&\\
&& & && && & && & & && P_1 \ar[r] & P_1 \ar[r] & P_1\oplus P_2 \ar[r] & P_2 &&\\
&& & && && & && & & && P_1 \ar[r] & P_1 \ar[r] & P_2 \ar[r] & 0 &&\\
&& P_2 \ar[r] & 0   && && P_1 \ar[r] & P_2 && & & && & & & &&\\
&& P_2 \ar[r] & P_1 && && P_1 \ar[r] & 0 && & & && & & & &&\\
&& & && && & && & & && P_1 \ar[r] & P_1 \ar[r] & P_1 \ar[r] & P_2 &&\\
&& & && && & && & & && 0 \ar[r] & P_1 \ar[r] & P_1 \ar[r] & P_2 &&\\
&& & && && & && P_1 \ar[r] & P_1 \ar[r] & P_2 && & & & &&\\
&& & && && & && P_1 \ar[r] & 0 \ar[r] & 0 && & & & &&\\
&& & && && & && & & && P_1 \ar[r] & P_1 \ar[r] & P_1 \ar[r] & P_2 &&\\
&& & && && & && & & && P_1 \ar[r] & 0 \ar[r] & 0 \ar[r] & 0 &&\\
&& & && && & && & & && & & & &&
\save "2,15"."3,18"*[F]\frm{}="a" \restore
\save "6,15"."7,18"*[F]\frm{}="b" \restore
\save "10,15"."11,18"*[F]\frm{}="c" \restore
\save "14,15"."15,18"*[F]\frm{}="d" \restore
\save "17,15"."18,18"*[F]\frm{}="e" \restore
\save "21,15"."22,18"*[F]\frm{}="f" \restore
\save "25,15"."26,18"*[F]\frm{}="g" \restore
\save "29,15"."30,18"*[F]\frm{}="h" \restore
\save "4,11"."5,13"*[F]\frm{}="i" \restore
\save "12,11"."13,13"*[F]\frm{}="j" \restore
\save "19,11"."20,13"*[F]\frm{}="k" \restore
\save "27,11"."28,13"*[F]\frm{}="l" \restore
\save "8,8"."9,9"*[F]\frm{}="m" \restore
\save "23,8"."24,9"*[F]\frm{}="n" \restore
\save "16,6"*[F]\frm{}="o" \restore
\save "8,3"."9,4"*[F]\frm{}="p" \restore
\save "23,3"."24,4"*[F]\frm{}="q" \restore
\ar@<6em> "a";"b"
\ar@<6em> "c";"d"
\ar@<6em> "e";"f"
\ar@<6em> "g";"h"
\ar@<2.5em> "i";"j"
\ar@<2.5em> "k";"l"
\ar@<1.3em> "m";"n"
\ar@<1.3em> "p";"q"
\ar "2,18";"1,20" 
\ar "6,18";"5,20" 
\ar "10,18";"9,20" 
\ar "14,18";"13,20" 
\ar "17,18";"17,20" 
\ar "21,18";"20,20" 
\ar "25,18";"24,20" 
\ar "29,18";"28,20" 
\ar "4,20";"3,18" |(0.4){[1]} 
\ar "8,20";"7,18" |(0.4){[1]} 
\ar "12,20";"11,18" |(0.4){[1]} 
\ar "15,20";"15,18" |(0.4){[1]} 
\ar "19,20";"18,18" |(0.4){[1]} 
\ar "23,20";"22,18" |(0.4){[1]} 
\ar "27,20";"26,18" |(0.4){[1]} 
\ar "31,20";"30,18" |(0.4){[1]} 
\ar "4,13";{"3,15"+<-0.9em, -0.1em>}
\ar "12,13";{"11,15"+<-0.9em, -0.1em>}
\ar "19,13";{"18,15"+<-0.9em, -0.1em>}
\ar "27,13";{"26,15"+<-0.9em, -0.1em>}
\ar "b";"i" |(0.2){[1]}
\ar "d";"j" |(0.2){[1]}
\ar "f";"k" |(0.2){[1]}
\ar "h";"l" |(0.2){[1]}
\ar "8,9";{"5,11"+<-0.9em, -0.4em>}
\ar "23,9";{"20,11"+<-0.9em, -0.4em>}
\ar "j";"m" |(0.35){[1]}
\ar "l";"n" |(0.35){[1]}
\ar "o";"m"
\ar "n";"o" |{[1]}
\ar "o";"p"
\ar {"q"+<1.7em,0.9em>};"o" |{[1]}
\ar "8,3";"6,1"
\ar "23,3";"21,1"
\ar "11,1";"9,3" |(0.4){[1]}
\ar "26,1";"24,3" |(0.4){[1]}
}}}
\]
\end{example}

\appendix
\def\thesection{\Alph{section}}
\section{Derived invariances of self-injectivity}\label{selfinj}

In the representation theory of algebras, it is important to know which properties of algebras are derived invariant. 
By Lemma \ref{Serre}, it is obvious that the properties of being symmetric and being Iwanaga-Gorenstein are derived invariant. 
However it is non-trivial to show that self-injectivity is derived invariant. 
This was proved by Al-Nofayee and Rickard. 
Here, we show it as an application of the partial order. 

Throughout this section let $A$ be a self-injective algebra over an algebraically closed field $k$ 
and $\TT:=\K^{\rm{b}}(\proj{A})$. 

Since $A$ is self-injective, it is clear that the Nakayama functor $\nu$ of $A$ acts on $\TT$. 
Moreover for any silting (respectively, tilting) object $T$ in $\TT$, 
we see that $\nu T$ is also silting (respectively, tilting). 

\begin{lemma}\label{lemma:tilting closed}
Let $T$ be a basic tilting object in $\TT$. 
Then we have $T\geq \nu T$. 
Moreover any object $X$ of $\TT$ with $T\geq X$ satisfies $T\geq\nu X$. 
\end{lemma}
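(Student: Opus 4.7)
The plan is two-step: first establish $T \geq \nu T$ by a direct Serre-duality computation using that $T$ is tilting and that $\nu$ is an autoequivalence of $\TT$ (because $A$ is self-injective), and then derive the ``moreover'' part as a formal consequence via Theorem \ref{partial order}(2)(i). The self-injectivity hypothesis will enter twice: to ensure that $\nu$ acts on $\TT$ as an autoequivalence, and (by the remark preceding the lemma) to guarantee that $\nu T$ is again a basic silting object, so that Theorem \ref{partial order}(2)(i) applies to the pair $(T,\nu T)$.

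For the first step, it suffices to verify that $\Hom_\TT(T, \nu T[i]) = 0$ for every $i > 0$. Lemma \ref{Serre} yields an isomorphism
\[\Hom_\TT(T, \nu T[i]) \simeq D\Hom_\TT(\nu T[i], \nu T),\]
and applying the autoequivalence $\nu^{-1}$ transforms the right-hand side into $D\Hom_\TT(T, T[-i])$. This space vanishes for every $i > 0$ since $T$ is tilting, hence satisfies $\Hom_\TT(T, T[j]) = 0$ for all nonzero $j$. This gives $T \geq \nu T$.

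For the second step, suppose $X \in \TT$ satisfies $T \geq X$. Since $\nu$ is a triangle autoequivalence, it preserves Hom-spaces and commutes with the shift, so the condition $\Hom_\TT(T, X[i]) = 0$ for $i > 0$ transports to $\Hom_\TT(\nu T, \nu X[i]) = 0$ for $i > 0$; that is, $\nu T \geq \nu X$. Now $T$ and $\nu T$ are both basic silting objects, and we have just shown $T \geq \nu T$, so Theorem \ref{partial order}(2)(i) tells us that any $Y$ with $\nu T \geq Y$ satisfies $T \geq Y$. Taking $Y = \nu X$ finishes the argument. The only mildly subtle point is recognizing that the ``moreover'' clause, which at first sight looks like a separate statement about arbitrary objects of $\TT$, reduces at once to the first part via the characterization of $\geq$ in Theorem \ref{partial order}(2)(i); the rest is routine.
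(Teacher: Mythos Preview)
Your proof is correct and follows essentially the same route as the paper: first compute $\Hom_\TT(T,\nu T[i])\simeq D\Hom_\TT(T[i],T)$ via Lemma~\ref{Serre} and the fact that $\nu$ is an autoequivalence, then derive the ``moreover'' part from $\nu T\geq\nu X$ together with Theorem~\ref{partial order}(2)(i). The only difference is that your write-up is a little more explicit about which part of Theorem~\ref{partial order} is invoked and why $\nu T$ is again basic silting; the paper leaves both implicit.
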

\begin{proof}
By Lemma \ref{Serre}, we have an isomorphism $\homo{\TT}{T}{\nu T[i]}\simeq D\homo{\TT}{T[i]}{T}=0$ for any $i\neq0$. 
Therefore we obtain $T\geq \nu T$. 
We can easily check that any object $X$ of $\TT$ with $T\geq X$ satisfies $\nu T\geq\nu X$. 
Hence the last assertion follows from Theorem \ref{partial order}. 
\end{proof}


Al-Nofayee and Rickard observed that the following result follows easily from Corollary 9 of \cite{HS}. 
It will play an important role in our discussion. 

\begin{proposition}\label{prop:AR} \cite[Theorem 1.1]{AR}
Let $\Lambda$ be a finite dimensional algebra for a field, 
and $\cdots, P^{-1}, P^{0}, P^{1}, \cdots$ a sequence of finitely generated projective $\Lambda$-modules, 
such that $P^{i}=0$ for all but finitely many $i$. 
Then up to isomorphism there are only finitely many complexes 
\[P=\cdots \to P^{-1}\to P^{0}\to P^{1}\to \cdots\]
satisfying $\homo{\K^{\rm b}(\proj\Lambda)}{P}{P[1]}=0$. 
\end{proposition}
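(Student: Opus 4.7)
The plan is to use a classical geometric argument: interpret the complexes with a fixed sequence of terms as points of an affine variety on which a natural algebraic group acts, and show that the rigidity hypothesis forces the relevant orbits to be open. Since a finite-dimensional variety admits only finitely many pairwise disjoint open subsets, this will give the required finiteness.

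Concretely, fix the sequence $(P^i)_{i\in\mathbb{Z}}$ and form the affine variety
\[
\mathcal{V} := \{(d^i) \in \prod_i \homo{\Lambda}{P^i}{P^{i+1}} \mid d^{i+1} d^i = 0 \text{ for all } i\},
\]
which is finite-dimensional over $k$ because only finitely many $P^i$ are nonzero. The algebraic group $G := \prod_i \operatorname{Aut}_\Lambda(P^i)$ acts by conjugation, $(g^i)\cdot(d^i) := (g^{i+1} d^i (g^i)^{-1})$, and its orbits are in bijection with isomorphism classes of chain complexes with terms $(P^i)$.

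The key step is a standard tangent-space computation: the tangent space $T_d\mathcal{V}$ at a point $d$ is naturally the space of chain maps $P\to P[1]$, while the tangent space to the $G$-orbit through $d$ is the subspace of null-homotopic such maps. Hence the normal space to the orbit is canonically $\homo{\K^{\rm b}(\proj\Lambda)}{P}{P[1]}$. Under the hypothesis this normal space vanishes, so the $G$-orbit of $d$ is open in $\mathcal{V}$; as $\mathcal{V}$ has only finitely many irreducible components, only finitely many such disjoint open orbits exist. Since the rigidity condition is invariant under isomorphism in $\K^{\rm b}(\proj\Lambda)$ and every $\K^{\rm b}$-isomorphism class of complexes with terms $(P^i)$ is represented by at least one orbit, the finiteness of orbits yields the finiteness of isomorphism classes.

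The main obstacle is the tangent-space identification---checking that infinitesimal orbit directions correspond precisely to null-homotopic chain maps is the infinitesimal form of ``a rigid object admits no nontrivial deformation'', and carries the essential content of the proof. Once that identification is in hand, the remainder is formal algebraic geometry.
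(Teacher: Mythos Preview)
The paper does not give its own proof of this proposition: it is quoted from \cite{AR}, with the remark that Al-Nofayee and Rickard observed it follows easily from Corollary~9 of \cite{HS}. So there is no in-paper argument to compare against.

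Your sketch is essentially the Huisgen-Zimmermann--Saor\'in deformation argument underlying \cite{HS}, and it is correct in outline. A couple of points are worth tightening. First, the sentence ``a finite-dimensional variety admits only finitely many pairwise disjoint open subsets'' should be phrased via irreducible components: any two nonempty open subsets of an irreducible component meet, so pairwise disjoint nonempty open orbits lie in distinct components, of which a Noetherian scheme has only finitely many. Second, the step ``normal space vanishes $\Rightarrow$ orbit open'' implicitly uses that $\dim T_d(G\cdot d)\le \dim_d\mathcal V\le \dim T_d\mathcal V$; equality of the outer terms forces smoothness of $\mathcal V$ at $d$ and openness of the orbit, so you need not assume smoothness a priori. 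Finally, $G$-orbits classify complexes up to chain isomorphism, which is finer than isomorphism in $\K^{\rm b}(\proj\Lambda)$, so finiteness of orbits certainly gives the stated finiteness; your last paragraph handles this correctly. With these clarifications the argument is complete, at least over an algebraically closed base field---which is the only case the paper actually uses in Appendix~\ref{selfinj}.
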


Using this proposition, we deduce the following fact. 

\begin{lemma}\label{lemma:silting power stable}
For any basic silting object $T$ in $\TT$, 
there exists a positive integer $n$ such that $\nu^nT$ is isomorphic to $T$. 
\end{lemma}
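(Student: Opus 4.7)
The plan is to combine the Nakayama permutation of the indecomposable projective summands of $A$ with the finiteness statement of Proposition \ref{prop:AR}. Since $A$ is self-injective, $\proj A=\mathsf{inj}\,A$, so the Nakayama functor $\nu$ restricts to an auto-equivalence of $\TT$; in particular $\nu$ permutes the isomorphism classes of indecomposable direct summands of $A$. Let $d$ denote the order of this Nakayama permutation, so that $\nu^{d}P\simeq P$ for every indecomposable projective $A$-module $P$.

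Now fix a basic silting object $T$ in $\TT$ and represent it by a complex of projectives
\[
T=(\cdots\to T^{i}\to T^{i+1}\to\cdots).
\]
Because $\nu^{d}$ sends each indecomposable projective back to itself (up to iso), after choosing representatives in each degree we may regard $\nu^{kd}T$, for every $k\geq 0$, as a complex with the \emph{same} sequence of projective modules $\cdots,T^{i},T^{i+1},\ldots$ as $T$, only with (possibly) different differentials. Moreover each $\nu^{kd}T$ is again silting, so it satisfies
\[
\Hom_{\TT}\bigl(\nu^{kd}T,\nu^{kd}T[1]\bigr)=0.
\]

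At this point Proposition \ref{prop:AR} kicks in: with the sequence of projective modules fixed, only finitely many isomorphism classes of complexes satisfy that vanishing condition. Hence in the infinite list $T,\nu^{d}T,\nu^{2d}T,\ldots$ some two entries must coincide, say $\nu^{ad}T\simeq\nu^{bd}T$ with $0\leq a<b$. Since $\nu$ is an auto-equivalence of $\TT$, applying $\nu^{-ad}$ yields $T\simeq\nu^{(b-a)d}T$, and $n:=(b-a)d$ is the desired positive integer.

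The only delicate point I foresee is the bookkeeping in the second step: one must be careful to replace $\nu^{kd}T$ by an isomorphic complex whose terms are \emph{literally} equal to those of $T$ (not merely isomorphic to them), so that Proposition \ref{prop:AR} applies to a single fixed graded projective. This is routine but needs to be spelled out; once it is, the pigeonhole argument completes the proof without further input.
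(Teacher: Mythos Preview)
Your proof is correct and essentially identical to the paper's: both find a power of $\nu$ fixing the graded projective underlying $T$, invoke Proposition~\ref{prop:AR} to get finitely many isomorphism classes, and then pigeonhole among the iterates. The only cosmetic difference is that the paper extracts this power via the resolution of Proposition~\ref{resolution} (finding $s$ with $\nu^s P_i\simeq P_i$ for the finitely many $P_i$ occurring) rather than by taking the full order of the Nakayama permutation; your route is, if anything, slightly more direct.
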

\begin{proof}
Applying shifts to $T$, we can assume $A\geq T$. 
By Proposition \ref{resolution}, there exist $\ell\geq 0$ and $P_{0}, P_{1}, \cdots, P_{\ell}\in \proj{A}$ 
with triangles 
\[\xymatrix@R=0.2cm{
T_1 \ar[r] & P_0 \ar[r] & T \ar[r] & T_1[1], \\
\cdots, \\
T_\ell \ar[r] & P_{\ell-1} \ar[r] & T_{\ell-1} \ar[r] & T_\ell[1], \\
0 \ar[r] & P_\ell \ar[r] & T_\ell \ar[r] & 0.
}\] 
Since $A$ is a self-injective algebra, 
we have a positive integer $s$ such that $P_{i}\simeq \nu^{s}P_{i}$ for any $0\leq i\leq \ell$. 
Therefore we also obtain triangles 
\[\xymatrix@R=0.2cm{
\nu^s T_1 \ar[r] & P_0 \ar[r] & \nu^s T \ar[r] & \nu^s T_1[1], \\
\cdots, \\
\nu^s T_\ell \ar[r] & P_{\ell-1} \ar[r] & \nu^s T_{\ell-1} \ar[r] & \nu^s T_\ell[1], \\
0 \ar[r] & P_\ell \ar[r] & \nu^s T_\ell \ar[r] & 0.
}\]
This implies that for each $i$, the $i$-th term of $\nu^s T$ is isomorphic to that of $T$. 
By Proposition \ref{prop:AR}, there exists a multiple $n$ of $s$ such that $T\simeq \nu^{n}T$. 
\end{proof}

Now we recover a result of \cite{AR} which is important in the proof of derived invariance of self-injectivity. 

\begin{theorem}\label{tilting stable}\cite[Theorem 2.1]{AR}
Let $T$ be a basic silting object in $\TT$. 
Then the following are equivalent:
\begin{enumerate}[$(1)$]
\item $T$ is a tilting object;
\item $\nu T$ is isomorphic to $T$.
\end{enumerate}
\end{theorem}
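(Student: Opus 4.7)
The plan is to reduce both directions to the partial-order machinery of Theorem \ref{partial order}, together with Lemma \ref{lemma:tilting closed} and Lemma \ref{lemma:silting power stable}.

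For the implication $(2) \Rightarrow (1)$, I would argue directly using the Serre-type duality of Lemma \ref{Serre}. Suppose $\nu T \simeq T$. Since $T$ is silting we already have $\homo{\TT}{T}{T[i]} = 0$ for all $i > 0$, and to upgrade $T$ to a tilting object we only need the same vanishing for $i < 0$. But Lemma \ref{Serre} gives
\[
\homo{\TT}{T}{T[i]} \simeq \homo{\TT}{T}{\nu T[i]} \simeq D\homo{\TT}{T[i]}{T} = D\homo{\TT}{T}{T[-i]},
\]
and for $i < 0$ the right-hand side vanishes by the silting property applied to $-i > 0$. So the ``easy'' direction is essentially a one-line computation.

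For $(1) \Rightarrow (2)$, I would iterate Lemma \ref{lemma:tilting closed} along the powers of $\nu$. Since $A$ is self-injective, $\nu$ is an autoequivalence of $\TT$ that preserves the class of tilting objects (as noted in the text just before Lemma \ref{lemma:tilting closed}); in particular each $\nu^i T$ is again a basic tilting object. Applying Lemma \ref{lemma:tilting closed} to $T$ gives $T \geq \nu T$, and applying $\nu^i$ to both sides (an autoequivalence preserves the relation $\geq$) yields $\nu^i T \geq \nu^{i+1} T$ for every $i \geq 0$. Stringing these together produces a descending chain
\[
T \geq \nu T \geq \nu^2 T \geq \cdots \geq \nu^n T
\]
of basic silting objects.

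The key step is now to close this chain into a loop. By Lemma \ref{lemma:silting power stable} applied to the silting object $T$, there exists a positive integer $n$ with $\nu^n T \simeq T$. Combined with the chain above, and since by Theorem \ref{partial order}(1) the relation $\geq$ is an honest partial order on $\silt \TT$, the antisymmetry forces all the intermediate inequalities to be equalities; in particular $T \simeq \nu T$, completing the direction. The main (small) obstacle is simply verifying that $\nu$ respects $\geq$ and preserves basic silting/tilting objects, which is immediate from $\nu$ being an autoequivalence on $\TT$ in the self-injective case.
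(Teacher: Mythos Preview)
Your proposal is correct and follows essentially the same route as the paper: for $(2)\Rightarrow(1)$ both use the Serre-type duality of Lemma~\ref{Serre} to flip the sign of the shift, and for $(1)\Rightarrow(2)$ both build the chain $T\geq\nu T\geq\nu^2T\geq\cdots$ via Lemma~\ref{lemma:tilting closed}, close it up with Lemma~\ref{lemma:silting power stable}, and conclude by antisymmetry of the partial order from Theorem~\ref{partial order}. The only cosmetic difference is that the paper obtains $\nu^jT\geq\nu^{j+1}T$ by applying Lemma~\ref{lemma:tilting closed} directly to the tilting object $\nu^jT$, whereas you transport the single inequality $T\geq\nu T$ by the autoequivalence $\nu^j$; both are equally valid.
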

\begin{proof}
We show the implication (2)$\Rightarrow$(1). 
As $T\simeq \nu T$, by Lemma \ref{Serre} we have isomorphisms 
\[\homo{\TT}{T}{T[i]}\simeq D\homo{\TT}{T[i]}{\nu T}\simeq D\homo{\TT}{T}{T[-i]}.\]
This immediately implies that $\homo{\TT}{T}{T[i]}=0$ for any $i<0$. 

We show the implication (1)$\Rightarrow$(2). 
Since $\nu^{j}T$ is tilting for any $j\in \mathbb{Z}$, 
by Lemma \ref{lemma:tilting closed} we have a sequence  
\[T\geq \nu T\geq \nu^{2}T \geq \cdots \]
of basic tilting objects. 
By Lemma \ref{lemma:silting power stable}, we obtain $T\simeq \nu^{n}T$ for some $n>0$. 
Hence the assertion follows from Theorem \ref{partial order}. 
\end{proof}

\begin{corollary}\label{selfinj stable}\cite[Theorem 2.1]{AR}
Self-injectivity of algebras is derived invariance. 
\end{corollary}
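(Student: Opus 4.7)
The plan is to combine Theorem \ref{tilting stable} with the compatibility of Nakayama functors under derived equivalences. Suppose $A$ is self-injective and $B$ is a finite-dimensional algebra derived equivalent to $A$; I want to conclude that $B$ is self-injective. By Rickard's Morita theory, there exists a basic tilting object $T$ in $\TT = \K^{\rm b}(\proj A)$ with $\End_\TT(T) \simeq B$, and the derived equivalence restricts to a triangulated equivalence $F \colon \K^{\rm b}(\proj A) \xrightarrow{\sim} \K^{\rm b}(\proj B)$ sending $T$ to $B$.

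Since $A$ is self-injective, $\nu_A$ is an autoequivalence of $\TT$, and because $T$ is tilting, Theorem \ref{tilting stable} yields $\nu_A T \simeq T$. Next, I would observe that $F$ intertwines the Nakayama functors, in the sense that there is a natural isomorphism $\nu_B \circ F \simeq F \circ \nu_A$ of functors $\K^{\rm b}(\proj A) \to \K^{\rm b}(\mod B)$. This can be read off from Lemma \ref{Serre}: the Nakayama functor is characterized by the bifunctorial Serre-type isomorphism $\Hom(X,P) \simeq D\Hom(P,\nu X)$, which any $k$-linear triangulated equivalence must preserve. Applying the intertwining to $T$ gives
\[
\nu_B(B) \simeq \nu_B(F(T)) \simeq F(\nu_A T) \simeq F(T) \simeq B
\]
in $\K^{\rm b}(\mod B)$. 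Since $\nu_B(B) = DB$ is a complex concentrated in degree zero, this isomorphism forces $DB \simeq B$ as right $B$-modules. But $DB$ is always an injective right $B$-module, so $B_B$ is injective, i.e.\ $B$ is self-injective.

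The main technical point is justifying the intertwining $\nu_B \circ F \simeq F \circ \nu_A$. Some care is needed because, a priori, $\nu_B$ is only known to be a functor $\K^{\rm b}(\proj B) \to \K^{\rm b}(\mod B)$ before self-injectivity of $B$ has been established. Fortunately the intertwining in $\K^{\rm b}(\mod B)$ is all that the above chain of isomorphisms requires, and it is precisely the conclusion $\nu_B(B) \simeq B$ that forces $DB$ to be projective, closing the loop. Alternatively, one can make the intertwining concrete by representing $F$ as tensoring with a two-sided tilting complex $X$ and invoking the natural isomorphism $X \otimes^{\mathbf{L}}_B DB \simeq DA \otimes^{\mathbf{L}}_A X$.
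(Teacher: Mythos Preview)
Your proof is correct and rests on the same pivot as the paper's---namely Theorem~\ref{tilting stable} giving $\nu_A T\simeq T$---but your route to $DB\simeq B$ is more functorial, while the paper's is more hands-on. The paper never invokes the equivalence $F$ or any intertwining $\nu_B\circ F\simeq F\circ\nu_A$. Instead it stays entirely on the $A$-side: from Lemma~\ref{Serre} one has a bifunctorial isomorphism $S\colon D\Hom_\TT(T,-)\xrightarrow{\sim}\Hom_\TT(-,\nu T)$, and evaluating at $T$ gives a right $B$-module isomorphism $DB\simeq\Hom_\TT(T,\nu T)$; then the isomorphism $\varphi\colon T\xrightarrow{\sim}\nu T$ from Theorem~\ref{tilting stable} yields the right $B$-module isomorphism $\Hom_\TT(T,\varphi)\colon B\xrightarrow{\sim}\Hom_\TT(T,\nu T)$, whence $DB\simeq B$ as right $B$-modules.

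What this buys: the paper's argument is completely self-contained---it uses only Lemma~\ref{Serre} and Theorem~\ref{tilting stable}, and never needs to extend $F$ to $\D^{\rm b}(\mod B)$ or to justify the commutation of Nakayama functors with derived equivalences, which (as you correctly flag) is a genuine technical point when self-injectivity of $B$ is not yet known. Your argument, by contrast, packages the same computation into the cleaner slogan ``Serre/Nakayama functors commute with equivalences,'' which is conceptually nicer and generalises more readily, at the cost of importing (or re-proving) that slogan. Either way the content is the same: your intertwining at $T$ \emph{is} the paper's identification $DB\simeq\Hom_\TT(T,\nu T)\simeq B$, just phrased on the $B$-side rather than the $A$-side.
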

\begin{proof}
Let $T$ be a basic tilting object in $\TT=\K^{\rm b}(\proj A)$ and put $B:=\ehomo{\TT}{T}$. 

We will show that there exists an isomorphism $DB\simeq \homo{\TT}{T}{\nu T}$ of right $B$-modules.
By Lemma \ref{Serre}, we have a functorial isomorphism $S:D\homo{\TT}{T}{-}\xrightarrow{\sim}\homo{\TT}{-}{\nu T}$.
This implies that $S_T:DB\to\homo{\TT}{T}{\nu T}$ is an isomorphism of $k$-vector spaces. 
Since $S$ is functorial, 
for any $f:X\to Y$ in $\TT$ we obtain a commutative diagram
\[\xymatrix{
D\homo{\TT}{T}{Y} \ar[r]^{S_Y}_{\sim} \ar[d]_{D\homo{\TT}{T}{f}} & \homo{\TT}{Y}{\nu T} \ar[d]^{\homo{\TT}{f}{\nu T}} \\
D\homo{\TT}{T}{X} \ar[r]_{S_X}^{\sim} & \homo{\TT}{X}{\nu T}.
}\]
Putting $X=Y=T$, by the commutative diagram above 
we find an equality $S_T(\alpha\cdot f)=S_T(\alpha)\cdot f$ for any $\alpha\in DB$. 
Hence we see that $S_T$ is an isomorphism of right $B$-modules. 

On the other hand, by Theorem \ref{tilting stable} we have an isomorphism $\varphi:T\xrightarrow{\sim}\nu T$. 
Therefore $\homo{\TT}{T}{\varphi}:B\to\homo{\TT}{T}{\nu T}$ is an isomorphism as a right $B$-module. 
Thus we obtain $B\simeq DB$ as a right $B$-module, 
which says that $B$ is a self-injective algebra. 
\end{proof}

\begin{remark}
In the proof of Corollary \ref{selfinj stable}, 
we have two algebra isomorphisms 
\begin{align}
\label{through nu}  \nu :B & \xrightarrow{\sim}\ehomo{\TT}{\nu T} \\
\label{through phi}      B & \xrightarrow{\sim}\ehomo{\TT}{\nu T}\ (f\mapsto \varphi f\varphi^{-1})
\end{align}
Thus the $k$-vector space $M:=\homo{\TT}{T}{\nu T}$ has two kinds of structure as a left $B$-module. 

By a similar argument in the proof of Corollary \ref{selfinj stable}, 
regarding $M$ as a left $B$-module through the algebra isomorphism (\ref{through nu}), 
we see that $S_T$ is an isomorphism as a $(B, B)$-bimodule. 
On the other hand, regarding $M$ as a left $B$-module through the algebra isomorphism (\ref{through phi}), 
we observe that $\homo{\TT}{T}{\varphi}$ is an isomorphism as a $(B, B)$-bimodule. 
However $DB$ is not isomorphic to $B$ as a $(B, B)$-bimodule unless $A$ is a symmetric algebra, 
since the algebra isomorphisms (\ref{through nu}) and (\ref{through phi}) are different.  
\end{remark}

We close this paper by giving an application of Lemma \ref{no common summand}. 

Let $\{P_1,\cdots,P_n\}$ be the set of non-isomorphic projective indecomposable $A$-modules. 
Then there exists a permutation $\rho$ of the set $I:=\{1,\cdots,n\}$, called the \emph{Nakayama permutation} of $A$, 
such that $\nu P_i\simeq P_{\rho(i)}$ for any integer $i$ in $I$.

We denote by $\tilt\TT$ the isomorphism classes of basic tilting objects in $\TT$. 

We have the following result, which was proved by Abe and Hoshino. 

\begin{corollary}\label{unique tilting}\cite[Proposition 2.14]{AH}
If the Nakayama permutation of $A$ is transitive, 
then we have $\tilt\TT=\{A[i]\ |\ i\in\mathbb{Z}\}$.
\end{corollary}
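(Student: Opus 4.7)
The plan is to deduce $T \simeq A[i]$ for any basic tilting object $T \in \tilt\TT$, exploiting that $\nu T \simeq T$ by Theorem \ref{tilting stable} together with the transitivity of the Nakayama permutation $\rho$. After a shift we may assume $A \geq T$ and $H^0(T) \neq 0$. If $T \in \add A$, then $T$ is a single projective module, and since $T$ is silting we have $\thick T = \TT$, forcing $\add T = \add A$ and hence $T \simeq A$; so suppose $T \notin \add A$ and aim for a contradiction.

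First, apply Proposition \ref{resolution} with silting object $A$ and $U_0 = T$, producing projective terms $T_0, T_1, \ldots, T_\ell \in \add A$ with $\ell \geq 1$. The minimality clauses --- each $f_i$ a minimal right $\add A$-approximation and each $g_{i+1}$ in the Jacobson radical --- determine the whole resolution from $T$ up to isomorphism. Applying the Nakayama functor $\nu$ to the resolution yields a resolution of $\nu T \simeq T$ with the same minimality properties, so this uniqueness gives $\nu T_i \simeq T_i$ for every $i$. Writing $T_i = \bigoplus_{j=1}^n P_j^{m_{i,j}}$, the isomorphism $\nu T_i \simeq T_i$ forces the multiplicity vector $(m_{i,j})_j$ to be $\rho$-invariant.

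Since $\rho$ consists of a single orbit, the multiplicities $m_{i,j}$ are constant in $j$, so each nonzero $T_i$ satisfies $\add T_i = \add A$. On the other hand, $T$ is presilting, so $\homo{\TT}{T}{T[\ell]} = 0$ (using $\ell \geq 1$), and Lemma \ref{no common summand} applied with $U_0 = U_0' = T$ yields $(\add T_0) \cap (\add T_\ell) = 0$. But $T_0$ is the projective cover of $H^0(T) \neq 0$, and $T_\ell \simeq U_\ell \neq 0$ because the resolution is of length $\ell$, so both $\add T_0$ and $\add T_\ell$ equal $\add A$ by the previous step --- the desired contradiction. Hence $T \simeq A$, and reinstating the shift gives $T \simeq A[i]$ for some $i \in \mathbb{Z}$.

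The main obstacle is making precise the $\nu$-invariance of the resolution: the claim that $\nu T \simeq T$ propagates termwise to $\nu T_i \simeq T_i$ rests on the fact that the resolution produced by Proposition \ref{resolution} is an invariant of $T$, not merely one of many resolutions, and this uses the minimality of the right approximations in an essential way. Once this is in hand, the transitivity of $\rho$ combined with Lemma \ref{no common summand} forces the two ends of the resolution to coincide in their summands, which is incompatible with their disjointness unless $\ell = 0$.
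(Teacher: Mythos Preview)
Your proof is correct and follows essentially the same route as the paper's: after normalizing by a shift so that $A \geq T$ and $H^0(T)\neq 0$, both arguments invoke Theorem \ref{tilting stable} to obtain $\nu T \simeq T$, deduce that each projective term of the minimal $\add A$-resolution from Proposition \ref{resolution} is $\nu$-stable, use transitivity of $\rho$ to force every nonzero term to have $\add$ equal to $\add A$, and then apply Lemma \ref{no common summand} to conclude $\ell = 0$. Your explicit justification of why the resolution is $\nu$-invariant (via uniqueness of minimal right approximations in a Krull--Schmidt category) spells out a step the paper leaves implicit; otherwise the arguments coincide.
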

\begin{proof}
Let $T$ be a basic tilting object in $\TT$. 
We can assume $A\geq T$ and $H^0(T)\not=0$. 
By Proposition \ref{resolution}, we have triangles 
\[\xymatrix@R=0.2cm{
T_1 \ar[r] & P_0 \ar[r] & T \ar[r] & T_1[1] \\
\cdots, \\
T_\ell \ar[r] & P_{\ell-1} \ar[r] & T_{\ell-1} \ar[r] & T_\ell[1] \\
0 \ar[r] & P_\ell \ar[r] & T_\ell \ar[r] & 0
}\] 
for some $P_0\not=0,P_1,\cdots,P_\ell\in \add A$. 
By Theorem \ref{tilting stable}, we obtain an isomorphism $T\simeq \nu T$, 
and so we get an isomorphism $P_{i}\simeq \nu P_{i}$ for any $0\leq i\leq \ell$. 
Since $P_0\simeq \nu P_0$ and the Nakayama permutation of $A$ is transitive, 
we see that $A$ is in $\add P_0$. 
By Lemma \ref{no common summand}, we have $\ell=0$. 
Hence $T$ belongs to $\add A$, which implies that there exists an isomorphism $T\simeq A$. 
\end{proof}



\end{document}